\newtheorem{thm}{Theorem}[section]
\newtheorem{lem}[thm]{Lemma}
\theoremstyle{definition}
\newtheorem{defn}{Definition}[section]
\newcommand{\scr}[1]{\mathscr #1}
\definecolor{wco}{rgb}{0.5,0.2,0.3}
\numberwithin{equation}{section} \theoremstyle{remark}
\newtheorem{rem}{Remark}[section]
\def\R{\mathbb R}  \def\ff{\frac} \def\ss{\sqrt} \def\B{\mathbf
B}
\def\N{\mathbb N} \def\kk{\kappa} 
\def\dd{\delta}  \def\vv{\varepsilon} \def\rr{\rho}
\def\<{\langle} \def\>{\rangle} \def\GG{\Gamma} \def\gg{\gamma}
\def\d{\text{\rm{d}}} \def\bb{\beta} \def\aa{\alpha} \def\D{\scr D}
  \def\si{\sigma} 
 \def\beq{\begin{equation}}  \def\F{\scr F}
\def\e{\text{\rm{e}}}  \def\OO{\Omega}  
 \def\tt{\tilde} 
 \def\P{\mathbb P} 
\def\C{\scr C}
\def\E{\mathbb E} 
  \def\LL{\Lambda}
 \def\B{\scr B}  
\def\to{\rightarrow}\def\ll{\lambda}
\def\8{\infty} \def\Y{\mathbb{Y}} 
\def\3{\triangle}\def\1{\lesssim}
\renewcommand{\tilde}{\widetilde}
\newcommand{\barray}{\begin{array}{ll}}
\newcommand{\earray}{\end{array}}
\newcommand{\wdt}{\widetilde}
\newcommand{\bea}{\begin{displaymath}\begin{array}{rl}}
\newcommand{\eea}{\end{array}\end{displaymath}}
\title{Exponential Mixing for  Retarded Stochastic Differential Equations}
\author{Jianhai Bao,\thanks{Department of Mathematics, Central South University,
Changsha, Hunan, 410075, P.R. China, majb@swansea.ac.uk} \and George
Yin,\thanks{Department of Mathematics, Wayne State University,
Detroit, MI 48202, USA, gyin@math.wayne.edu} \and Le Yi Wang,\thanks{Department of Electrical and
Computer Engineering, Wayne State University, Detroit,
MI 48202, USA, lywang@wayne.edu}  \and Chenggui
Yuan\thanks{Department of Mathematics, Swansea University, Singleton
Park, SA2 8PP, UK, C.Yuan@swansea.ac.uk} }
\begin{document}

\maketitle

\begin{abstract}
In this paper, we discuss exponential mixing property for Markovian
semigroups generated by   segment processes associated with several
class of retarded Stochastic Differential Equations (SDEs) which
cover SDEs with constant/variable/distributed time-lags. In
particular, we investigate the exponential mixing property for (a)
non-autonomous retarded SDEs by the  Arzel\`{a}--Ascoli tightness
characterization of the space $\C$ equipped with the uniform
topology  (b) neutral SDEs with continuous sample paths by a
generalized Razumikhin-type argument and a stability-in-distribution
approach and  (c) jump-diffusion retarded SDEs  by the Kurtz
criterion of tightness for the space $\D$ endowed with the Skorohod
topology.

\vskip 0.2 true in \noindent {\bf Keywords:} retarded stochastic
differential equation, invariant measure, exponential mixing, unform
metric, Skorohod metric

\vskip 0.2 true in \noindent
 {\bf AMS Subject Classification:}\    60H15, 60J25, 60H30, 39B82

 \end{abstract}

\section{Introduction}
Ergodic property of stochastic dynamical systems, which are
independent of the past history, has attracted lots of attentions.
For stochastic dynamical systems driven by continuous  noise
processes (e.g. Wiener process and fractional Brownian motion), we
refer to \cite{H05,MSH,O08,Rey,V97,Z09} and references cited
therein. And there has been intense interest in studying dynamical
systems subject to discontinuous Markov processes due to their
importance both in theory and in applications.   There is also
extensive literature on ergodicity of SDEs driven by L\'{e}vy
processes (e.g. Poisson process, $\aa$-stable process, cylindrical
$\aa$-stable process and subordinate Brownian motion), see e.g.
\cite{DXZ,K09,PSXZ,W12}, to name a few.

Many physical phenomena should be and in fact have already been
successfully modeled by stochastic dynamical systems whose evolution
in time is governed by random forces as well as intrinsic dependence
of  the state on a finite part of its past history. Such models may
be identified as retarded (functional) SDEs (see e.g. the monograph
\cite{M84} for more details). Relative to SDEs without memory, the
long-term behavior of retarded SDEs is not yet complete. There is a
few of literature on investigation in existence of stationary
solutions, see e.g. It\^o and Nisio
 \cite{IN} for retarded SDEs with infinite memory by
 the Prohorov-Skorohod theory of the totally bounded sets of stochastic
 processes, Bakhtin and  Mattingly \cite{BM} for retarded SDEs with
 additive noise and infinite memory by a Lyapunov function
 approach,   Liu \cite{Liu} and Rei$\beta$ et al. \cite{RR}
for infinite-dimensional retarded Langevin equations and
finite-dimensional semi-linear retarded SDEs driven by jump
processes with the diffusion term
 being
 independent of the past history,
 respectively, by the variation of constants
formula. For existence of an invariant measure, Es-Sarhir et al.
\cite{ESV} and Kinnally and Williams \cite{KW} considered retarded
SDEs with super-linear drift term and positivity constraints,
respectively; Bo and Yuan \cite{BY} investigated   reflected SDEs
with jumps and point delays.  With regard to uniqueness of invariant
measures,  by an asymptotic coupling method, Hairer et al.
\cite{HMS} addressed the open problem of uniqueness of invariant
measure for non-degenerate retarded SDEs under some appropriate
assumptions which need not guarantee existence of an invariant
measure, and Scheutzow \cite{S12} discussed a very simple linear
retarded SDE without the drift term.

 In this paper, we shall investigate the exponential
mixing property for several class of retarded SDEs which cover SDEs
with constant/variable/distributed delays. The content of this paper
is organized as follows.   Section \ref{sec:2} discusses the
exponential mixing property for a class of non-autonomous retarded
SDEs by the  Arzel\`{a}--Ascoli tightness characterization of the
space $\C$ equipped with the uniform topology. In Section
\ref{sec:3}, we proceed to neutral SDEs with continuous sample paths
by a generalized Razumikhin-type argument and a
stability-in-distribution approach. The last section is devoted to
the exponential ergodicity for jump-diffusion retarded SDEs  by the
Kurtz criterion of tightness for the space $\D$ endowed with the
Skorohod topology.

\section{Exponential Mixing for Retarded SDEs}\label{sec:2}
We start with some notation. For each integer $n\ge1$, let
$(\R^n,\<\cdot,\cdot\>,|\cdot|)$ be the $n$-dimensional Euclidean
space and  $\R^n\otimes\R^m$ denote the totality of all $n\times m$
matrices  endowed with the Frobenius norm $\|\cdot\|$.  For a fixed
constant $\tau>0$, $\C:=C([-\tau,0];\R^n)$ stands for the family of
all continuous mappings $\zeta:[-\tau,0]\mapsto\R^n$ equipped with
the uniform norm
$\|\zeta\|_\8:=\sup_{-\tau\le\theta\le0}|\zeta(\theta)|$.   For any
continuous function $f:[-\tau,\8)\mapsto\R^n$ and $t\ge0$, let
$f_t\in\C$ be such that $f_t(\theta)=f(t+\theta)$ for each
$\theta\in[-\tau,0]$. As usual,  $\{f_t\}_{t\ge0}$ is called the
segment process of $\{f(t)\}_{t\ge-\tau}$. Let $W(t)$ be an
$m$-dimensional Wiener process defined on a complete filtered
probability space $(\OO,\F,\{\F_t\}_{t\ge0},\P)$. The
 notation $\mathcal {P}(\C)$ denotes the collection of all probability
measures on $(\C,\B(\C))$, $\B_b(\C)$ means the set of all bounded
measurable functions $F:\C\to\R$ endowed with the uniform norm
$\|F\|_0:=\sup_{\phi\in\C}|F(\phi)|$, and   $\mu(\cdot)$ stands for
a probability measure on $[-\tau,0].$ For any $F\in\B_b(\C)$ and
$\pi(\cdot)\in\mathcal {P}(\C)$, let $\pi(F):=\int_\C
F(\phi)\pi(\d\phi)$. Throughout this paper, $c>0$ is a generic
constant whose value may change from line to line but independent of
the time parameters.

In this section, we consider a retarded SDE on
$(\R^n,\<\cdot,\cdot\>,|\cdot|)$ in the framework
\begin{equation}\label{A1}
\d X(t)=b(t,X_t)\d t+\si(t,X_t)\d W(t),\ \ \ t>0
\end{equation}
with the initial
data $X_0=\xi\in\C$, where $b:[0,\8)\times\C\mapsto\R^n$ and
$\si:[0,\8)\times\C\mapsto\R^{n}\otimes\R^m$ are  measurable and
locally Lipschitz with respect to the second variable. Throughout
this section, we assume that the initial value $\xi\in\C$ is
independent of $\{W(t)\}_{t\ge0}.$

For any $\phi,\psi\in\C$ and  $t,p\ge0$, we assume that
\begin{enumerate}
\item[(H$1$)] There exist $\aa_1>\aa_2>0$ such that
\begin{equation*}
\begin{split}
&\E\{|\phi(0)-\psi(0)|^p(2\<\phi(0)-\psi(0),b(t,\phi)-b(t,
\psi)\> \quad +\|\si(t,\phi)-\si(t,\psi)\|^2)\}\\
&\quad\le-\aa_1\E|\phi(0)-\psi(0)|^{2+p}+\aa_2\sup_{-\tau\le\theta\le0}\E\{|\phi(0)-\psi(0)|^p|\phi(\theta)-\psi(\theta)|^2\};
\end{split}
\end{equation*}
\item[(H$2$)] There exists $\aa_3>0$ such that
\begin{equation*}
\E\|\si(t,\phi)-\si(t,\psi)\|^{2+p}\le
\aa_3\sup_{-\tau\le\theta\le0}\E(|\phi(\theta)-\psi(\theta)|^{2+p}).
\end{equation*}
\end{enumerate}

The following remark shows that there are some examples such that
(H1) and (H2).

\begin{rem}
{\rm Let  $b(t,\phi)=b(t,\phi(0),\phi(-\dd(t)))$ and
$\si(t,\phi)=\si(t,\phi(0),\phi(-\dd(t)))$ with $\phi\in\C$, where
$\dd:[0,\8)\mapsto[0,\tau]$ is a measurable function. For any
$\phi\in\C$ and $t\ge0$, if
\begin{equation*}
\begin{split}
&2\<\phi(0)-\psi(0),b(t,\phi(0),\phi(-\dd(t)))-b(t,\psi(0),\psi(-\dd(t)))\>\\
&\qquad\quad +\|\si(t,\phi(0),\phi(-\dd(t)))-\si(t,\psi(0),\psi(-\dd(t)))\|^2\\
&\quad\le-\aa_1|\phi(0)-\psi(0)|^2+\aa_2|\phi(-\dd(t))-\psi(-\dd(t))|^2,
\end{split}
\end{equation*}
and
\begin{equation*}
\|\si(t,\phi)-\si(t,\psi)\|^2\le
\aa_3(|\phi(0)-\psi(0)|^2+|\phi(-\dd(t))-\psi(-\dd(t))|^2),
\end{equation*}
then (H1) and (H2) hold respectively for some appropriate constants
$\aa_1,\aa_2,\aa_3>0$. On the other hand, for arbitrary $\phi\in\C$
and $t\ge0$, if
\begin{equation*}
\begin{split}
&2\<\phi(0)-\psi(0),b(t,\phi)-b(t,\psi)\>+\|\si(t,\phi)-\si(t,\psi)\|^2\\
&\quad\le-\aa_1|\phi(0)-\psi(0)|^2+\aa_2\int_{-\tau}^0|\phi(\theta)-\psi(\theta)|^2\mu(\d\theta),
\end{split}
\end{equation*}
and
\begin{equation*}
\|\si(t,\phi)-\si(t,\psi)\|^2\le
\aa_3\Big(|\phi(0)-\psi(0)|^2+\int_{-\tau}^0|\phi(\theta)-\psi(\theta)|^2\mu(\d\theta)\Big),
\end{equation*}
where $\mu(\cdot)$ is a probability measure on $[-\tau,0]$, then
(H1) and (H2) are also fulfilled for some $\aa_1,\aa_2,\aa_3>0$.
From the previous discussions, we deduce that our framework  cover
SDEs with  constant/variable/distributed delays. }
\end{rem}

 Since $b$ and $\si$ are locally
Lipschitz, \eqref{A1} admits a unique local strong solution
$\{X(t,\xi)\}_{t\ge-\tau}$ with the initial value $\xi\in\C$.
Moreover, (H1) guarantees that $\P(\rr=\8)=1$, where
$\rr:=\lim_{n\to\8}\rr_n$ is
 the life time of $\{X(t,\xi)\}_{t\ge-\tau}$ with $\rr_n:=\inf\{t>0,|X(t)|\ge n\}$ for any integer $n\ge0.$
Therefore, \eqref{A1} has a unique strong solution
$\{X(t,\xi)\}_{t\ge-\tau}$ under (H1).

\smallskip

\smallskip

For citation convenience, several fundamental inequalities   are
summarized in the following lemmas.

\begin{lem}\label{Inq1}
{\rm (\cite[Lemma 8.1]{IN}) Let  $u,v:[0,\8)\mapsto\R_+$ be
continuous functions and $\bb>0$. If
\begin{equation*}
u(t)\le u(s)-\bb\int_s^tu(r)\d r+\int_s^tv(r)\d r,\ \ \ 0\le s<t<\8,
\end{equation*}
then
\begin{equation*}
u(t)\le u(0)+\int_0^t\e^{-\bb(t-r)}v(r)\d r.
\end{equation*}
}
\end{lem}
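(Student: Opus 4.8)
The plan is to recognize the integral hypothesis as a \emph{monotonicity} statement and then reduce the estimate to solving a single auxiliary linear ODE. First I would introduce the continuous function
$\Phi(t):=u(t)+\bb\int_0^t u(r)\,\d r-\int_0^t v(r)\,\d r$ on $[0,\8)$. For $0\le s<t$ one has $\Phi(t)-\Phi(s)=u(t)-u(s)+\bb\int_s^t u(r)\,\d r-\int_s^t v(r)\,\d r$, which is exactly the quantity the hypothesis bounds by $0$; hence $\Phi$ is non-increasing, so $\Phi(t)\le\Phi(0)=u(0)$ and $\Phi(\sigma)\ge\Phi(t)$ for every $\sigma\in[0,t]$. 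This is the only place where the full strength of the assumption is used, and it is the crux of the argument: using $s=0$ alone yields merely $u(t)\le g(t):=u(0)-\bb\int_0^t u+\int_0^t v$ with $g\in C^1$ and $g'=-\bb u+v\le v$, which gives only the far weaker bound $u(t)\le u(0)+\int_0^t v(r)\,\d r$ and loses the exponential weight entirely.

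Next I would set $U(t):=\int_0^t u(r)\,\d r\in C^1$, with $U(0)=0$ and $U'=u$; by the definition of $\Phi$, $U$ solves the linear equation $U'(t)+\bb U(t)=\Phi(t)+\int_0^t v(r)\,\d r$, so by variation of constants
$U(t)=\int_0^t\e^{-\bb(t-\sigma)}\bigl(\Phi(\sigma)+\int_0^\sigma v(r)\,\d r\bigr)\,\d\sigma$.
Substituting this into the identity $u(t)=\Phi(t)+\int_0^t v(r)\,\d r-\bb U(t)$ and applying Fubini, the $v$-contributions collapse to exactly $\int_0^t\e^{-\bb(t-r)}v(r)\,\d r$, while for the $\Phi$-part the monotonicity from the first step gives $\bb\int_0^t\e^{-\bb(t-\sigma)}\Phi(\sigma)\,\d\sigma\ge\Phi(t)\,\bb\int_0^t\e^{-\bb(t-\sigma)}\,\d\sigma=(1-\e^{-\bb t})\Phi(t)$, whence $\Phi(t)-\bb U(t)\le\e^{-\bb t}\Phi(t)\le\e^{-\bb t}u(0)\le u(0)$, the last two steps using $u\ge0$ and $\e^{-\bb t}\le1$. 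Adding the two pieces yields $u(t)\le u(0)+\int_0^t\e^{-\bb(t-r)}v(r)\,\d r$ (indeed even $u(t)\le\e^{-\bb t}u(0)+\int_0^t\e^{-\bb(t-r)}v(r)\,\d r$).

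As a more hands-on alternative to the ODE computation one may discretize: partition $[0,t]$ into $N$ equal subintervals of length $\delta$, apply the hypothesis on each $[t_{i-1},t_i]$, use the uniform continuity of $u$ on $[0,t]$ to replace $\int_{t_{i-1}}^{t_i}u$ by $\delta u(t_i)$ up to an error $\le\delta\,\omega_u(\delta)$ (with $\omega_u$ the modulus of continuity of $u$ on $[0,t]$), and iterate the resulting recursion $u(t_i)(1+\bb\delta)\le u(t_{i-1})+\bb\delta\,\omega_u(\delta)+\int_{t_{i-1}}^{t_i}v(r)\,\d r$. Letting $N\to\8$, the prefactor $(1+\bb\delta)^{-N}\to\e^{-\bb t}$, the accumulated continuity error is $\le\bb t\,\omega_u(\delta)\to0$, and the remaining sum converges to the Riemann integral $\int_0^t\e^{-\bb(t-r)}v(r)\,\d r$, recovering the same conclusion.

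In either approach the single genuine obstacle is the reformulation step — one must exploit the hypothesis for all pairs $s<t$, not just $s=0$ — and everything after that is routine bookkeeping with the signs $u\ge0$, $v\ge0$, $\e^{-\bb t}\le1$, together with a Fubini interchange (respectively a passage to the limit in a Riemann sum).
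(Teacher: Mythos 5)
The paper does not actually prove this lemma: it is quoted from It\^o--Nisio \cite[Lemma 8.1]{IN} and used as a black box, so there is no in-text argument to compare against. Your proof is correct and self-contained. The decisive observation --- that the hypothesis for \emph{all} pairs $s<t$ is precisely the statement that $\Phi(t)=u(t)+\bb\int_0^tu(r)\,\d r-\int_0^tv(r)\,\d r$ is non-increasing --- is the right one, and the rest checks out: $U(t)=\int_0^tu(r)\,\d r$ solves $U'+\bb U=\Phi+\int_0^{\cdot}v$ with $U(0)=0$, the Fubini interchange collapses the $v$-terms to $\int_0^t\e^{-\bb(t-r)}v(r)\,\d r$, and $\bb\int_0^t\e^{-\bb(t-\sigma)}\Phi(\sigma)\,\d\sigma\ge(1-\e^{-\bb t})\Phi(t)$ by monotonicity. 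One notational slip: in the line ``$\Phi(t)-\bb U(t)\le\e^{-\bb t}\Phi(t)$'' the quantity $\bb U(t)$ must be read as only the $\Phi$-portion of $\bb U(t)$ (the $v$-portion having already been absorbed into the other piece); the surrounding grouping into two pieces makes the intent unambiguous, so this is cosmetic rather than a gap. Your argument in fact yields the sharper bound $u(t)\le\e^{-\bb t}u(0)+\int_0^t\e^{-\bb(t-r)}v(r)\,\d r$, which implies the stated conclusion since $u(0)\ge0$ and $\e^{-\bb t}\le1$. The discretized alternative is also sound, but the continuous version is cleaner and is the one worth keeping.
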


\begin{lem}\label{Inq2}
{\rm (\cite[Lemma 8.2]{IN}) Let $u:[0,\8)\mapsto\R_+$ be a
continuous function and $\dd>0, \aa>\bb>0.$ If
\begin{equation*}
u(t)\le\dd+\bb\int_0^t\e^{-\aa(t-s)}u(s)\d s,\ \ \ t\ge0,
\end{equation*}
then $u(t)\le(\dd\aa)/(\aa-\bb)$. }
\end{lem}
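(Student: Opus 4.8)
The plan is to turn the integral inequality into an elementary absorption bound, using that the exponential kernel $\e^{-\aa(t-s)}$ has total mass at most $1/\aa$ on $[0,t]$ and that a continuous $u$ is automatically bounded on compact intervals. First I would fix $T>0$ and put $M_T:=\sup_{0\le t\le T}u(t)\in[0,\8)$, finiteness coming from continuity of $u$ on $[0,T]$. Substituting the crude bound $u(s)\le M_T$ into the hypothesis, for every $t\in[0,T]$,
\begin{equation*}
u(t)\le \dd+\bb M_T\int_0^t\e^{-\aa(t-s)}\d s=\dd+\ff{\bb M_T}{\aa}\bigl(1-\e^{-\aa t}\bigr)\le \dd+\ff{\bb}{\aa}M_T .
\end{equation*}
Taking the supremum over $t\in[0,T]$ gives $M_T\le \dd+(\bb/\aa)M_T$, i.e. $(\aa-\bb)M_T/\aa\le\dd$; since $\aa>\bb>0$ the coefficient $(\aa-\bb)/\aa$ is strictly positive, so $M_T\le \dd\aa/(\aa-\bb)$. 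This bound is independent of $T$, so letting $T\to\8$ yields $u(t)\le \dd\aa/(\aa-\bb)$ for all $t\ge0$, which is the claim.

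A variant closer in spirit to Lemma \ref{Inq1} is to set $v(t):=\int_0^t\e^{-\aa(t-s)}u(s)\d s=\e^{-\aa t}\int_0^t\e^{\aa s}u(s)\d s$, which is $C^1$ with $v(0)=0$ and $v'(t)=u(t)-\aa v(t)$. The hypothesis $u(t)\le\dd+\bb v(t)$ then yields $v'(t)\le \dd-(\aa-\bb)v(t)$; multiplying by $\e^{(\aa-\bb)t}$ and integrating over $[0,t]$ gives $v(t)\le \ff{\dd}{\aa-\bb}\bigl(1-\e^{-(\aa-\bb)t}\bigr)\le \ff{\dd}{\aa-\bb}$, and feeding this back into $u(t)\le\dd+\bb v(t)$ gives $u(t)\le \dd+\ff{\bb\dd}{\aa-\bb}=\ff{\dd\aa}{\aa-\bb}$.

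I do not expect a genuine obstacle: this is a linear Gronwall-type estimate and the only two points deserving a word are that $M_T<\8$ (from continuity of $u$) and that $\aa-\bb>0$ (from the standing hypothesis $\aa>\bb$), the latter being exactly what makes the self-improving inequality $M_T\le\dd+(\bb/\aa)M_T$ solvable for $M_T$. Should one prefer to avoid compactness, the same conclusion follows from a continuity/bootstrap argument on the relatively open set $\{t\ge0:\,u(t)<\dd\aa/(\aa-\bb)+\vv\}$ for arbitrary $\vv>0$; but the bounded-on-compacts argument is the shortest.
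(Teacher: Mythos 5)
Your argument is correct, and both variants close without a gap: the only hypotheses you use are continuity of $u$ (to get $M_T<\8$) and $\aa>\bb>0$ (to absorb the $(\bb/\aa)M_T$ term, resp.\ to make $\aa-\bb>0$), which is exactly what the statement provides. There is nothing in the paper to compare against: the lemma is quoted verbatim from It\^o--Nisio \cite[Lemma 8.2]{IN} and no proof is reproduced, so your write-up supplies a self-contained justification where the paper relies on a citation. Of your two routes, the second (setting $v(t)=\int_0^t\e^{-\aa(t-s)}u(s)\,\d s$, noting $v'=u-\aa v$ and running a one-line Gronwall estimate on $v'\le\dd-(\aa-\bb)v$) is the one that meshes with Lemma \ref{Inq1} and with how the paper actually uses the result in the proof of Lemma \ref{uni}; it also shows the slightly sharper bound $u(t)\le\dd+\ff{\bb\dd}{\aa-\bb}\bigl(1-\e^{-(\aa-\bb)t}\bigr)$. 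The first route is shorter but coarser, since it discards the decay of the kernel; both deliver the stated constant $\dd\aa/(\aa-\bb)$.
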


\begin{lem}\label{Halanay}
{\rm (\cite[Theorem 2.1]{MG})For $a,b>0$, let $u(\cdot)$ be a
nonnegative function such that
\begin{equation*}
u'(t)\le -a u(t)+b\sup_{t-\tau\le s\le t} u(s),\ \ \ t>0
\end{equation*}
and $u(s)=|\psi(s)|$ is continuous for $s\in[-\tau,0]$. Then, for
$a>b>0,$ there exists $\ll>0$ such that
\begin{equation*}
u(t)\le \Big(\sup_{-\tau\le s\le 0}u(s)\Big)\e^{-\ll t},\ \ \ t\ge
0.
\end{equation*}
}
\end{lem}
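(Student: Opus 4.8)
The plan is to run a direct comparison (barrier) argument. First I would fix the decay rate. Consider $F(\ll):=\ll-a+b\,\e^{\ll\tau}$ for $\ll\ge0$; this is continuous and strictly increasing with $F(0)=b-a<0$, so there is a unique $\ll_0>0$ with $F(\ll_0)=0$. I would pick any $\ll\in(0,\ll_0)$, so that
\begin{equation*}
b\,\e^{\ll\tau}<a-\ll .
\end{equation*}
Put $M:=\sup_{-\tau\le s\le 0}u(s)$. The goal is then to prove $u(t)\le M\,\e^{-\ll t}$ for all $t\ge0$ for this choice of $\ll$, which is precisely the assertion.

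Next, to handle the initial segment cleanly (and to cover the degenerate case $M=0$ at the same time), I would work with a slightly enlarged barrier $v_\vv(t):=(M+\vv)\,\e^{-\ll t}$ for $\vv>0$ and claim that $u(t)<v_\vv(t)$ for every $t\ge-\tau$. On $[-\tau,0]$ this is immediate since $\e^{-\ll s}\ge1$ there, hence $u(s)\le M<M+\vv\le v_\vv(s)$. If the claim failed on $(0,\8)$, set $t_0:=\inf\{t>0:u(t)\ge v_\vv(t)\}$; then $t_0>0$, and by continuity $u(t_0)=v_\vv(t_0)$ while $u(s)<v_\vv(s)$ for all $-\tau\le s<t_0$.

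The crux is the derivative comparison at $t_0$. On the one hand, $u-v_\vv$ is negative on $[t_0-\tau,t_0)$ and vanishes at $t_0$, so its left derivative at $t_0$ is nonnegative, giving $u'(t_0)\ge v_\vv'(t_0)=-\ll(M+\vv)\,\e^{-\ll t_0}$. On the other hand, $v_\vv$ is decreasing, so its supremum over the memory window $[t_0-\tau,t_0]$ is attained at the left endpoint, $\sup_{t_0-\tau\le s\le t_0}v_\vv(s)=(M+\vv)\,\e^{\ll\tau}\e^{-\ll t_0}$; since $u\le v_\vv$ on that window, the differential inequality yields
\begin{equation*}
u'(t_0)\le -a\,u(t_0)+b\sup_{t_0-\tau\le s\le t_0}u(s)\le (M+\vv)\,\e^{-\ll t_0}\big(b\,\e^{\ll\tau}-a\big)<-\ll(M+\vv)\,\e^{-\ll t_0},
\end{equation*}
where the last step uses $b\,\e^{\ll\tau}-a<-\ll$. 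The two bounds contradict each other, so the claim holds; letting $\vv\downarrow0$ gives $u(t)\le M\,\e^{-\ll t}$ for all $t\ge0$.

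I expect the only genuinely delicate point to be the joint choice of $\ll$ together with the observation that the supremum of the decaying barrier over $[t_0-\tau,t_0]$ carries the factor $\e^{\ll\tau}$ rather than $1$; everything else is bookkeeping, including the (mild) regularity needed to speak of $u'(t_0)$ at the touching point. If one prefers to avoid one-sided derivatives when $u$ is merely absolutely continuous, an alternative is to integrate the differential inequality on $[s,t]$ and apply Lemma~\ref{Inq1} with $\bb=a$ and $v(r)=b\sup_{r-\tau\le\sigma\le r}u(\sigma)$, then run a contradiction argument of the same flavor on the resulting integral estimate; I would keep the barrier version as the main line since it is shortest.
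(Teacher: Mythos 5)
Your barrier argument is correct: the choice of $\ll$ via the root of $\ll-a+b\,\e^{\ll\tau}=0$, the $\vv$-enlarged comparison function, and the derivative contradiction at the first touching point are all sound (and the $\e^{\ll\tau}$ factor from the memory window is indeed the one point that matters). The paper itself gives no proof of this lemma --- it is quoted from Mohamad and Gopalsamy \cite[Theorem 2.1]{MG} --- and your argument is essentially the classical proof of the Halanay inequality given there, so there is nothing to reconcile.
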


With  Lemma \ref{Inq1} and Lemma \ref{Inq2} in hand, we can obtain a
uniform bound of segment process $\{X_t(\xi)\}_{t\ge-\tau}$ with the
initial data $\xi\in\C$,  which plays a crucial role in
investigation on existence of an invariant measure of \eqref{A1}.
For notation brevity, in the sequel we shall write $X(t)$ and $X_t$
instead of $X(t,\xi)$ and $X_t(\xi)$ respectively.

\begin{lem}\label{uni}
{\rm Assume that (H1) and (H2) hold. Then there exists a
sufficiently small $\kk>0$ such that
\begin{equation}\label{A10}
\sup_{t\ge-\tau}\E\|X_t(\xi)\|_\8^{2+\kk}<\8.
\end{equation}

}
\end{lem}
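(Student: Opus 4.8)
The plan is to establish the bound first for the one–point moments $u(t):=\E|X(t)|^{2+\kk}$ and then to lift it to the segment $X_t$. I would apply It\^o's formula to $|X(t)|^{2+\kk}$ (made rigorous by the usual localization along the stopping times $\rr_n$ and a passage to the limit), take expectations to remove the martingale part, and bound $\tfrac{(2+\kk)\kk}{2}|X|^{\kk-2}|\si^{*}X|^2\le\tfrac{(2+\kk)\kk}{2}|X|^{\kk}\|\si\|^2$, obtaining for $0\le s\le t$
\begin{equation*}
u(t)\le u(s)+\frac{2+\kk}{2}\,\E\int_s^t|X(r)|^{\kk}\Big(2\<X(r),b(r,X_r)\>+(1+\kk)\|\si(r,X_r)\|^2\Big)\,\d r .
\end{equation*}
Since (H1) and (H2) control only the increments $b(r,\cdot)-b(r,0)$ and $\si(r,\cdot)-\si(r,0)$, I would split off the contributions of $b(r,0)$ and $\si(r,0)$ (bounded uniformly in $r$): being of lower order in $|X|$, they are absorbed by Young's inequality into $\vv|X|^{2+\kk}+c$.

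Now comes the key step: insert (H1) with $\psi\equiv0$ and (H2), and handle the mixed quantities $\E\{|X(r)|^{\kk}|X(r+\theta)|^2\}$ and $\E\{|X(r)|^{\kk}\|\si(r,X_r)-\si(r,0)\|^2\}$ by H\"older's and Young's inequalities (with exponents $\tfrac{2+\kk}{\kk}$ and $\tfrac{2+\kk}{2}$). Collecting the resulting terms leads to a Halanay–type integral inequality
\begin{equation*}
u(t)\le u(s)-a\int_s^t u(r)\,\d r+\int_s^t\Big(b\sup_{r-\tau\le\rho\le r}u(\rho)+c\Big)\,\d r ,\qquad 0\le s\le t ,
\end{equation*}
whose effective constants satisfy $a\to\aa_1$ and $b\to\aa_2$ as $\kk\downarrow0$ — the only deformations come from the factor $(1+\kk)$ in front of $\|\si\|^2$ (the It\^o correction) and from the weights $\tfrac{2}{2+\kk}$, $\tfrac{\kk}{2+\kk}$ produced by the elementary inequalities. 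Since $\aa_1>\aa_2$ \emph{strictly}, I can then fix $\kk>0$ (and the Young parameter $\vv>0$) small enough that $a>b>0$. Verifying that the gap $\aa_1-\aa_2$ survives the second–order It\^o term and all the splittings is the main obstacle of the argument.

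Granting $a>b>0$, the integral lemmas of It\^o and Nisio finish the one–point estimate. Lemma~\ref{Inq1} (with $\bb=a$ and $v(r)=b\sup_{r-\tau\le\rho\le r}u(\rho)+c$) gives $u(t)\le u(0)+\tfrac{c}{a}+b\int_0^t\e^{-a(t-r)}\sup_{r-\tau\le\rho\le r}u(\rho)\,\d r$; writing $\bar u(t):=\sup_{-\tau\le\rho\le t}u(\rho)$, which is nondecreasing with $\bar u(0)\le\E\|\xi\|_\8^{2+\kk}=:u_0<\8$, one reduces this to $\bar u(t)\le u_0+\tfrac{c}{a}+b\int_0^t\e^{-a(t-r)}\bar u(r)\,\d r$, and Lemma~\ref{Inq2} then yields $\bar u(t)\le a(u_0+\tfrac{c}{a})/(a-b)$, uniformly in $t$, i.e.\ $\sup_{t\ge-\tau}\E|X(t)|^{2+\kk}<\8$.

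To upgrade to $\E\|X_t\|_\8^{2+\kk}=\E\sup_{t-\tau\le s\le t}|X(s)|^{2+\kk}$, I would write, for $t\ge\tau$ and $s\in[t-\tau,t]$, $X(s)=X(t-\tau)+\int_{t-\tau}^s b(r,X_r)\,\d r+\int_{t-\tau}^s\si(r,X_r)\,\d W(r)$, and combine the uniform one–point bound for $\E|X(t-\tau)|^{2+\kk}$, H\"older's inequality for the drift, and the Burkholder--Davis--Gundy inequality together with H\"older's inequality and (H2) for the stochastic integral; this gives $\sup_{t\ge-\tau}\E\|X_t\|_\8^{2+\kk}<\8$, the range $t\in[-\tau,\tau]$ being covered directly by $\E\|\xi\|_\8^{2+\kk}$ and the bound already obtained. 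The secondary technical point here is the passage from $\sup\E$ to $\E\sup$, which leans on the growth conditions in force on $b$ and $\si$; and throughout, the finiteness of $u$ and $\bar u$ on bounded intervals—needed to legitimize the Gronwall–type rearrangements—is secured by the localization or by a crude a priori moment bound.
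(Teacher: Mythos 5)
Your one--point estimate follows the paper's route almost verbatim: It\^o's formula for $|X(t)|^{2+\kk}$, insertion of (H1) (with $p=\kk$) and (H2), Young/H\"older to reduce the mixed moments $\E\{|X|^{\kk}|X(\cdot+\theta)|^{2}\}$ to $\rr$ and its running supremum, the observation that the perturbed constants converge to $\aa_1$ and $\aa_2$ as $\kk\downarrow0$ so that the strict gap $\aa_1>\aa_2$ survives for small $\kk$, and then Lemma \ref{Inq1} followed by Lemma \ref{Inq2} applied to the nondecreasing envelope. That part is sound and is exactly what the paper does.

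The gap is in your final step, the upgrade from $\sup_{t}\E|X(t)|^{2+\kk}<\8$ to $\sup_{t}\E\|X_t\|_\8^{2+\kk}<\8$. You propose to write $X(s)=X(t-\tau)+\int_{t-\tau}^{s}b(r,X_r)\,\d r+\int_{t-\tau}^{s}\si(r,X_r)\,\d W(r)$ and to treat the drift integral ``by H\"older's inequality,'' which requires a bound on $\E|b(r,X_r)|^{2+\kk}$ (or at least on $\E\bigl(\int_{t-\tau}^{t}|b(r,X_r)|\,\d r\bigr)^{2+\kk}$). No such bound is available: (H2) controls only $\si$, and (H1) constrains $b$ only through the one-sided quantity $\<\phi(0)-\psi(0),b(t,\phi)-b(t,\psi)\>$, so $b$ may grow superlinearly (e.g.\ a cubic dissipative drift satisfies (H1) while $\E|b(r,X_r)|^{2+\kk}$ is not controlled by the moments you have). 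Your own caveat that this step ``leans on the growth conditions in force on $b$'' points at the problem: no such growth condition is in force here. The paper avoids this by applying It\^o's formula to $|X(t+\theta)|^{2+\kk}$ over $[t-\tau,t+\theta]$, so that the drift enters only through $|X|^{\kk}\<X,b(\cdot,X_\cdot)\>$, which (H1) does control, and the Burkholder--Davis--Gundy inequality is invoked only for the stochastic integral term, whose sup is then absorbed as $\tfrac12\E\|X_t\|_\8^{2+\kk}$. Replacing your pathwise decomposition of $X(s)$ by this It\^o-formula version closes the gap; the rest of your argument stands.
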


\begin{proof}
 For any $\kk>0$,  by the It\^o formula,  we obtain that
\begin{equation}\label{A2}
\begin{split}
\rr(t)&:=\E|X(t)|^{2+\kk}\\
&\le\ff{2+\kk}{2}\E\int_0^t|X(s)|^\kk\{2\<X(s),b(s,X_s)\>+\|\si(s,X_s)\|^2\}\d
s\\
&\quad+|\xi(0)|^{2+\kk}+\ff{\kk(2+\kk)}{2}\E\int_0^t|X(s)|^\kk\cdot\|\si(s,X_s)\|^2\d
s\\
&=:I_1(t)+I_2(t).
\end{split}
\end{equation}
By (H1) and (H2), it is readily to see that there exist
$\nu_1>\nu_2>0$ such that
\begin{equation}\label{A3}
\begin{split}
\E\{|\phi(0)|^\kk(2\<\phi(0),b(t,\phi)\>+\|\si(t,\phi)\|^2) \}&\le
-\nu_1\E|\phi(0)|^{2+\kk}\\
&\quad+\nu_2\sup_{-\tau\le\theta\le0}\E(|\phi(0)|^\kk\cdot|\phi(\theta)|^2)+c
\end{split}
\end{equation}
for any $t\ge0$ and $ \phi\in\C.$ This, together with the Young
inequality:
\begin{equation}\label{A15}
a^\bb b^{1-\bb}\le \bb a+(1-\bb)b,\ \ \ a,b>0,\bb\in(0,1),
\end{equation}
gives that
\begin{equation*}
\begin{split}
I_1(t)
&\le\ff{2+\kk}{2}\int_0^t\{-\nu_1\rr(s)+\nu_2\sup_{-\tau\le\theta\le0}\E(|X(s)|^\kk\cdot|X(s+\theta)|^2)+c\}\d
s\\
&\le-\ff{(2+\kk)\nu_1}{2}\int_0^t\rr(s)\d
s+\ff{(2+\kk)\nu_2}{2}\int_0^t\Big\{\ff{\kk}{2+\kk}\rr(s)+\ff{2}{2+\kk}\sup_{-\tau\le\theta\le
s}\rr(r)+c\Big\}\d
s\\
&\le-\ff{(2+\kk)}{2}\Big(\nu_1-\ff{\nu_2\kk}{2+\kk}-\kk\Big)\int_0^t\rr(s)\d
s+\int_0^t \{c+\nu_2r(s) \}\d s,
\end{split}
\end{equation*}
in which $r(t):=\sup_{0\le s\le t}\rr(s)$. According  to (H2) and
  Young's inequality \eqref{A15}, it follows that
\begin{equation*}
\begin{split}
I_2(t)&\le\|\xi\|_\8^{2+\kk}+\ff{\kk(2+\kk)}{2}\int_0^t\Big\{\ff{\kk}{2+\kk}\rr(s)+\ff{2}{2+\kk}\E\|\si(s,X_s)\|^{2+\kk}\Big\}\d
s\\
&\le
\|\xi\|_\8^{2+\kk}+\ff{c\kk(2+\kk)}{2}\int_0^t\{1+\rr(s)+r(s)\}\d s.
\end{split}
\end{equation*}
Hence, we arrive at
\begin{equation}\label{A2}
\rr(t)\le\|\xi\|_\8^{2+\kk}-\ll_1\int_0^t\rr(s)\d
s+\int_0^t\{c+\ll_2r(s)\}\d s,
\end{equation}
where, for a sufficiently small $\kk\in(0,1)$,
\begin{equation*}
\ll_1:=\ff{(2+\kk)}{2}\Big(\nu_1-\ff{\nu_2\kk}{2+\kk}-(c+1)\kk\Big)>\ll_2:=\nu_2+\ff{c\kk(2+\kk)}{2}
\end{equation*}
due to $\nu_1>\nu_2$. Combining \eqref{A2}  with Lemma \ref{Inq1}
gives that
\begin{equation}\label{A14}
\rr(t)\le\|\xi\|_\8^{2+\kk}+\int_0^t\e^{-\ll_1(t-s)}\{c+\ll_2r(s)\}\d
s.
\end{equation}
For a nondecreasing function $u:[0,\8)\mapsto\R^+$ and any $\ll>0$,
observe that the integral
\begin{equation*}
\int_0^t\e^{-\ll(t-s)}u(s)\d s\ \  \mbox{ is nondecreasing with
respect to } t
\end{equation*}
due to the fact that
\begin{equation*}
\int_0^t\e^{-\ll(t-r)}u(s)\d s=\ff{(1-\e^{-\ll
t})u(0)}{\ll}+\int_0^t\ff{1-\e^{-\ll(t-s)}}{\ll}\d u(s).
\end{equation*}
By the nondecreasing property of $r(t)$ with respect to $t$, we
therefore infer from \eqref{A14} that
\begin{equation*}
r(t)\le\|\xi\|_\8^{2+\kk}+\int_0^t\e^{-\ll_1(t-s)}\{c+\ll_2r(s)\}\d
s\le c+\ll_2\int_0^t\e^{-\ll_1(t-s)}r(s)\d s.
\end{equation*}
Thanks to $\ll_1>\ll_2$,  Lemma \ref{Inq2}   leads to
\begin{equation}\label{A8}
\sup_{t\ge-\tau}\rr(t)<\8.
\end{equation}
Next, for any $t\ge\tau$, applying the It\^o formula, together with
the Burkhold-Davis-Gundy inequality and the Young inequality
\eqref{A15}, we deduce from \eqref{A3} that
\begin{equation*}
\begin{split}
\E\|X_t\|_\8^{2+\kk}&\le \rr(t-\tau)+c\int_{t-\tau}^t\{1+\rr(s)+r(s)\}\d s\\
&\quad+(2+\kk)\E\Big(\sup_{-\tau\le\theta\le0}\Big|\int_{t-\tau}^{t+\theta}|X(s)|^\kk\<X(s),\si(s,X_s)\d
W(s)\>\Big|\Big)\\
&\le
\ff{1}{2}\E\|X_t\|_\8^{2+\kk}+\rr(t-\tau)+c\int_{t-\tau}^t\{1+\rr(s)+r(s)\}\d
s.
\end{split}
\end{equation*}
That is,
\begin{equation}\label{A16}
\E\|X_t\|_\8^{2+\kk}\le2\rr(t-\tau)+c\int_{t-\tau}^t\{1+\rr(s)+r(s)\}\d
s,\ \ \ t\ge\tau.
\end{equation}
Moreover, note that
\begin{equation*}
\E\|X_t\|_\8^{2+\kk} \le\|\xi\|_\8^{2+\kk}+\E\Big(\sup_{0\le
t\le\tau}|X(t)|^{2+\kk}\Big),\ \ \ t\in[0,\tau].
\end{equation*}
Following a similar argument to derive \eqref{A16}, we deduce that
\begin{equation}\label{A17}
\E\|X_t\|_\8^{2+\kk}\le c+c\int_0^t\{1+\rr(s)+r(s)\}\d s,\ \ \
t\in[0,\tau].
\end{equation}
Then the desired assertion \eqref{A10}  follows by taking
\eqref{A8}-\eqref{A17} into account.
\end{proof}

\begin{defn}
{\rm A probability measure $\pi(\cdot)\in\mathcal {P}(\C)$ is called
an invariant measure of \eqref{A1} if, for arbitrary $F\in\B_b(\C)$,
\begin{equation*}
\pi(P_tF)=\pi(F),\ \ \ t\ge 0,
\end{equation*}
where $P_tF(\xi):=\E F(X_t(\xi))$.  }
\end{defn}

\begin{rem}
{\rm If $\pi(\cdot)\in\mathcal {P}(\C)$ is an invariant measure of
\eqref{A1} and the initial segment enjoys the same law, by
\cite[Lemma 1.1.9, p.14]{A09}, independence of $\xi\in\C$ and
$\{W(t)\}_{t\ge0}$ and smooth property of conditional expectation,
 one has
\begin{equation*}
\pi(F)=\int_\C\E
F(X_t(\eta))\pi(\d\eta)=\E(\E(F(X_t(\xi)))|\F_0)=\E(F(X_t(\xi))).
\end{equation*}
Then we conclude that $X_t(\xi)$ shares the law $\pi\in\mathcal
{P}(\C)$, i.e., the law of $X_t(\xi)$ is invariant under time
translation.

}
\end{rem}

The main result of this section is stated below.

\begin{thm}\label{Erg}
{\rm  Under (H1) and (H2), \eqref{A1} has a unique
 invariant measure $\pi(\cdot)\in\mathcal {P}(\C)$ and is
 exponentially mixing. More precisely, there exists $\ll>0$ such
 that
\begin{equation}\label{B6}
|P_tF(\xi)-\pi(F)|\le c\e^{-\ll t},\ \ \ t\ge0,\ F\in\B_b(\C),\
\xi\in\C.
\end{equation}
 }
\end{thm}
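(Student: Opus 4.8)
The plan is to prove exponential mixing via a coupling/contraction estimate in the $L^{2+\kk}$-sense on the segment processes, followed by a standard Cauchy-sequence argument to extract the invariant measure. First I would take two solutions $X(t)=X(t,\xi)$ and $Y(t)=Y(t,\eta)$ of \eqref{A1} driven by the \emph{same} Wiener process $W$, with initial data $\xi,\eta\in\C$. Writing $Z(t)=X(t)-Y(t)$ and applying the It\^o formula to $|Z(t)|^{2+\kk}$ as in Lemma \ref{uni}, the differences $b(t,X_t)-b(t,Y_t)$ and $\si(t,X_t)-\si(t,Y_t)$ appear exactly in the combination controlled by (H1) and (H2). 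This yields, after the same Young-inequality bookkeeping used to pass from \eqref{A3} to \eqref{A2} (but now \emph{without} the additive constant $c$, since everything is homogeneous in $Z$), a differential inequality of the form
\begin{equation*}
\frac{\d}{\d t}\,\E|Z(t)|^{2+\kk}\le -a\,\E|Z(t)|^{2+\kk}+b\sup_{-\tau\le\theta\le0}\E|Z(t+\theta)|^{2+\kk}
\end{equation*}
with $a>b>0$ for $\kk$ small (the gap $a-b$ coming from $\aa_1>\aa_2$ in (H1), just as $\nu_1>\nu_2$ gave $\ll_1>\ll_2$). Lemma \ref{Halanay} (the Halanay-type inequality) then delivers a $\ll>0$ with
\begin{equation*}
\E|X(t,\xi)-Y(t,\eta)|^{2+\kk}\le C\Big(\sup_{-\tau\le s\le 0}\E|\xi(s)-\eta(s)|^{2+\kk}\Big)\e^{-\ll t},
\end{equation*}
and upgrading the pointwise-in-$t$ bound to the segment (sup-in-$\theta$) bound $\E\|X_t-Y_t\|_\8^{2+\kk}\le C\e^{-\ll t}$ is done by the same BDG-plus-Young estimate that produced \eqref{A16}-\eqref{A17} in the proof of Lemma \ref{uni}.

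Next I would use this contraction to build the invariant measure. Let $\mu_t^\xi$ denote the law of $X_t(\xi)$ on $\C$. For $s<t$, the Markov property and the contraction estimate (comparing the solution started at time $0$ from $\xi$ with the solution started at time $t-s$ from the segment $X_{t-s}(\xi)$, i.e. using that $X_t(\xi)$ has the same law as $X_s(X_{t-s}(\xi))$) give
\begin{equation*}
W_{2+\kk}(\mu_t^\xi,\mu_s^\xi)^{2+\kk}\le \E\|X_s(X_{t-s}(\xi))-X_s(\xi)\|_\8^{2+\kk}\le C\e^{-\ll s}\,\E\|X_{t-s}(\xi)-\xi\|_\8^{2+\kk},
\end{equation*}
and the last expectation is bounded uniformly in $t-s$ by Lemma \ref{uni}. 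Hence $\{\mu_t^\xi\}_{t\ge0}$ is Cauchy in the Wasserstein metric $W_{2+\kk}$ on $\mathcal P(\C)$; since that metric space is complete, there is a limit $\pi\in\mathcal P(\C)$, which is independent of $\xi$ (again by the contraction, $W_{2+\kk}(\mu_t^\xi,\mu_t^\eta)\to0$). Invariance $\pi(P_tF)=\pi(F)$ follows from Feller continuity of $P_t$ (a consequence of local Lipschitz continuity of $b,\si$ together with the moment bound \eqref{A10}) by passing to the limit in $\mu_{t+s}^\xi=\mu_s^\xi P_t$. Uniqueness is immediate from the contraction: any two invariant measures have the same law as their time-$t$ push-forwards, which converge to each other.

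Finally, for the quantitative mixing bound \eqref{B6}, fix $F\in\B_b(\C)$ and $\xi\in\C$. Using invariance, $\pi(F)=\int_\C \E F(X_t(\eta))\,\pi(\d\eta)$, so
\begin{equation*}
|P_tF(\xi)-\pi(F)|=\Big|\int_\C\big(\E F(X_t(\xi))-\E F(X_t(\eta))\big)\pi(\d\eta)\Big|.
\end{equation*}
Here the difficulty is that $F$ is merely bounded measurable, not Lipschitz, so I cannot directly estimate $|F(X_t(\xi))-F(X_t(\eta))|$ by the distance of the arguments. The standard remedy: first prove \eqref{B6} for $F$ bounded Lipschitz, where $|\E F(X_t(\xi))-\E F(X_t(\eta))|\le \|F\|_{Lip}\,\E\|X_t(\xi)-X_t(\eta)\|_\8\le c\,\|F\|_{Lip}\e^{-(\ll/(2+\kk))t}$ by the contraction and Jensen, integrate over $\pi(\d\eta)$ using $\int\|\eta\|_\8^{2+\kk}\pi(\d\eta)<\8$ (which holds since $\pi$ is the $W_{2+\kk}$-limit of measures with uniformly bounded $(2+\kk)$-moments). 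This proves $W_1$-exponential convergence $\mu_t^\xi\to\pi$. To reach general $F\in\B_b(\C)$ one needs a regularization/smoothing step: either invoke that on the Polish space $\C$ convergence in $W_1$ with a total-variation-type improvement—via the (hypo)ellipticity of the diffusion—gives convergence in total variation at an exponential rate, or, more elementarily in the setup intended by the authors, combine the contraction with the strong Feller / irreducibility structure to bound $\|P_tF(\xi)-\pi(F)\|$ by $c\|F\|_0\e^{-\ll t}$. I expect this last passage from Lipschitz test functions to arbitrary bounded measurable $F$ (and correspondingly from Wasserstein to the uniform-metric statement \eqref{B6}) to be the main obstacle, and it is presumably where the Arzel\`a--Ascoli tightness characterization advertised in the introduction enters: tightness of $\{\mu_t^\xi\}$ in $\C$ (via equicontinuity estimates on $X_t$ from the moment bounds) plus the contraction upgrades weak convergence to the required exponential bound on $|P_tF(\xi)-\pi(F)|$ uniformly over $\|F\|_0\le1$.
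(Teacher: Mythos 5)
Your contraction step is essentially the paper's Step 2: the authors apply It\^o's formula to $|X(t,\xi)-X(t,\eta)|^2$ (i.e.\ they use (H1) with $p=0$ rather than your $L^{2+\kk}$ version, so no Young bookkeeping is needed), obtain $u'(t)\le-\aa_1u(t)+\aa_2\sup_{t-\tau\le s\le t}u(s)$, invoke the Halanay inequality (Lemma \ref{Halanay}), and upgrade to the segment norm by the same BDG argument you describe, arriving at \eqref{B5}. Where you genuinely diverge is the construction of $\pi$. Your Wasserstein--Cauchy argument is the ``remote start'' method, and its key identity --- that $X_t(\xi)$ has the same law as $X_s(X_{t-s}(\xi))$ --- requires the coefficients to be autonomous; for \eqref{A1} with genuinely time-dependent $b(t,\cdot)$, $\si(t,\cdot)$ the solution restarted from the segment $X_{t-s}(\xi)$ at time $0$ does not reproduce the law of $X_t(\xi)$, so the Cauchy estimate for $\{\mu_t^\xi\}$ breaks down. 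The paper explicitly flags this (see the remark following Theorem \ref{jump}) as the reason it avoids the remote-start method and instead uses the Krylov--Bogoliubov construction: Ces\`aro averages $\mu_n=\frac1n\int_0^n\P_t(\xi,\cdot)\d t$, tightness of $\{\mu_n\}$ via the Arzel\`a--Ascoli modulus-of-continuity criterion \cite[Theorem 8.2]{B} (controlling $\sup_{|u-v|\le\dd}|X(u)-X(v)|$ through the local boundedness of $b$, a Kolmogorov-type estimate for the stochastic integral, and the moment bound of Lemma \ref{uni}), and the Feller property to pass to the limit. So the Arzel\`a--Ascoli input advertised in the abstract lives entirely in the existence step, not --- as you conjectured --- in the final upgrade of the mixing estimate.

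On that final upgrade: you are right that the synchronous-coupling contraction only directly controls $|P_tF(\xi)-P_tF(\eta)|$ for Lipschitz $F$, and right to be suspicious of the jump to all of $\B_b(\C)$. But the paper does not use strong Feller, irreducibility, or any total-variation smoothing; it simply writes $|P_tF(\xi)-\pi(F)|\le\int_\C|P_tF(\xi)-P_tF(\eta)|\pi(\d\eta)$ and concludes from \eqref{B5} together with the cited approximation lemma \cite[Lemma 7.1.5]{DZ} (the same device used in Step 2 to deduce $\pi=\pi'$ from $|\pi(f)-\pi'(f)|\le c\e^{-\ll t}$ for Lipschitz $f$). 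In other words, the passage from Lipschitz to bounded measurable test functions is handled by a citation, not by the elliptic/regularization machinery you propose; if you want to reconstruct the proof as written, prove \eqref{B6} for bounded Lipschitz $F$ exactly as you did and then invoke that lemma, rather than introducing hypoellipticity, which is not assumed anywhere in (H1)--(H2). Your uniqueness argument and the reduction $\pi(F)=\int_\C\E F(X_t(\eta))\pi(\d\eta)$ coincide with the paper's Steps 2--3.
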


\begin{proof}
The whole proof of this theorem is divided into the following three
steps.
\smallskip

\noindent{\bf Step 1: Existence of an Invariant Measure.} The proof
on existence of an invariant measure  is due to the classical
Arzel\`{a}--Ascoli tightness characterization of the space $\C$.
Recall   that $X_t(\xi)$ admits by \cite[Theorem 1.1, p.51]{M84} the
Markovian property although the solution process $X(t,\xi)$ is not
Markovian. For arbitrary integer $n\ge1$, set
\begin{equation*}
\mu_n(\cdot):=\ff{1}{n}\int_0^n\P_t(\xi,\cdot)\d t,
\end{equation*}
where $\P_t(\xi,\cdot)$ is the Markovian transition kernel of
$X_t(\xi)$. By the Krylov-Bogoliubov theorem \cite[Theorem 3.1.1,
p.21]{DZ}, to show existence of an invariant measure, it is
sufficient to verify that  $\{\mu_n(\cdot)\}_{n\ge1}$ is relatively
compact. Note that the phase space $\C$ for the segment process
$X_t(\xi)$ is a complete separable space under the uniform metric
$\|\cdot\|_\8$ (see e.g. \cite[p.220]{B}). Taking
\cite[Theorem 6.2, p.37]{B} into consideration, we need only
show that $\{\mu_n(\cdot)\}_{n\ge1}$ is tight. Moreover, thanks to
\cite[Theorem 8.2, p.55]{B}, it suffices to claim that
\begin{equation}\label{e8}
\lim_{\dd\downarrow0}\sup_{n\ge1}\mu_n(\varphi\in\C:w_{[-\tau,0]}(\varphi,\dd)\ge\vv)=0
\end{equation}
for any $\vv>0$, where $w_{[-\tau,0]}(\varphi,\dd)$, the modulus of
continuity of  $\varphi\in\C$ (see e.g. \cite[p.54]{B}), is defined
by
\begin{equation*}
w_{[-\tau,0]}(\varphi,\dd):=\sup_{|s-t|\le\dd,s,t\in[-\tau,0]}|\varphi(s)-\varphi(t)|,\
\ \ \dd>0.
\end{equation*}
In the sequel, for simplicity we write $X(t)$ and $X_t$ instead of
$X(t,\xi)$ and $X_t(\xi)$ respectively. Since
\begin{equation*}
\begin{split}
I(t,\dd):&=\sup_{t\le v\le u\le t+\tau,0\le
u-v\le\dd}|X(u)-X(v)|\\
 &\le\sup_{t\le v\le u\le
t+\tau,0\le u-v\le\dd}\int_v^u|b(s,X_s)|\d s+\sup_{t\le v\le u\le
t+\tau,0\le
u-v\le\dd}\Big|\int_v^u\si(s,X_s)\d W(s)\Big|\\
&=:I_1(t,\dd)+I_2(t,\dd),\ \ \ t\ge\tau,
\end{split}
\end{equation*}
one has
\begin{equation*}
\begin{split}
\P(I(t,\dd)\ge \vv)&\le \P(I_1(t,\dd)\ge  \vv/2)+
\P(I_2(t,\dd)\ge\vv/2).
\end{split}
\end{equation*}
For any $\wdt \vv\in(0,1)$, by the Chebyshev inequality and Lemma
\ref{uni}, there exists an $R_0>0$ sufficiently large such that
\begin{equation}\label{B1}
\begin{split}
\P(&\|X_t\|_\8> R_0)+\P(\|X_{t+\tau}\|_\8> R_0)\le
R^{-2}_0\sup_{t\ge-\tau}(\E\|X_{t+\tau}\|_\8^2+\E\|X_t\|_\8^2)\le\wdt \vv.
\end{split}
\end{equation}
Moreover, since $b$ enjoys locally bounded property, there exists a
sufficiently small $\dd_0>0$ such that
\begin{equation}\label{B2}
\begin{split}
\P(I_1(t,\dd)\ge  \vv/2|\ \ \ \|X_t\|_\8\le R_0,\|X_{t+\tau}\|_\8\le
R_0)=0,\ \ \ \dd<\dd_0.
\end{split}
\end{equation}
 Accordingly, we obtain from \eqref{B1} and \eqref{B2} that
\begin{equation}\label{B3}
\begin{split}
\P(I_1(t,\dd)\ge \vv/2) &\le\P (I_1(t,\dd)\ge \vv/2 |\ \ \
\|X_t\|_\8\le R_0,\|X_{t+\tau}\|_\8\le
R_0)\\
&\quad+\P(\|X_t\|_\8\ge R_0)+\P(\|X_{t+\tau}\|_\8\ge
R_0)\\
&\le \wdt \vv.
\end{split}
\end{equation}
On the other hand, for $\kk\in(0,1)$ such that Lemma \ref{uni} holds
and arbitrary $0\le s\le t$, by the Burkhold-Davis-Gundy inequality,
(H2) and Lemma \ref{uni}, it follows that
\begin{equation*}
\begin{split}
\E\Big|\int_s^t\si(r,X_r)\d W(r)\Big|^{2+k} &\le
c(t-s)^{\kk/2}\int_s^t\{1+\E\|X_r\|_\8^{2+\kk}\}\d
r\\
&\le c(t-s)^{1+\kk/2}.
\end{split}
\end{equation*}
This, combining with  the Kolmogrov tightness criterion
\cite[Problem 4.11, p.64]{KS},  implies that
\begin{equation}\label{d1}
\lim_{\dd\downarrow0}\sup_{t\ge\tau}\P(I_2(t,\dd)\ge\vv/2)=0.
\end{equation}
Consequently, \eqref{e8} follows from \eqref{B3}, \eqref{d1},  the
arbitrariness of $\wdt \vv$, and by noticing that
\begin{equation*}
\begin{split}
\mu_n(\varphi\in\C:w_{[-\tau,0]}(\varphi,\dd)\ge\vv)
&\le\ff{2\tau}{n}+\ff{1}{n}\int_{\tau}^{n}\P(I(t,\dd)\ge\vv)\d t
\end{split}
\end{equation*}
for $n>\tau.$ Since $\{\mu_n(\cdot)\}_{n\ge1}$ is relative compact
due to \eqref{e8}, there exists a subsequence, still denoted by
$\{\mu_n(\cdot)\}_{n\ge1}$ without confusion, such that
$\{\mu_n(\cdot)\}_{n\ge1}$ converges weakly to some
$\pi(\cdot)\in\mathcal {P}(\C)$, which indeed is an invariant
measure of   $X_t(\xi)$ by \cite[Theorem 3.1.1, p.21]{DZ} and
recalling from  \cite[Theorem 3.1, p.67]{M84} that the
stochastically continuous semigroup $P_t$ is a Feller semigroup,
i.e., $P_tf\in C_b(\C)$ for any $F\in C_b(\C)$ and $t\ge0.$

\smallskip

\noindent{\bf Step 2: Uniqueness of Invariant Measures.} By the
It\^o formula, it is easy to see that
\begin{equation}\label{eq14}
\begin{split}
u(t):=\E&|X(t,\xi)-X(t,\eta)|^2\\&=|\xi(0)-\eta(0)|^2
+\int_0^t\E\{2\<X(s,\xi)-X(s,\eta),b(s,X_s(\xi))-b(s,X_s(\eta))\>\\
&\quad+\|\si(s,X_s(\xi))-\si(s,X_s(\eta))\|^2\}\d s.
\end{split}
\end{equation}
Differentiating with respect to $t$ on both sides of \eqref{eq14},
one has from (H1) with $p=0$ that
\begin{equation*}
 u'(t)\le-\aa_1u(t)+\aa_2\sup_{t-\tau\le s\le t}|u(s)|.
\end{equation*}
Then, Lemma \ref{Halanay} yields that
\begin{equation}\label{B4}
\E|X(t,\xi)-X(t,\eta)|^2\le \|\xi-\eta\|_\8^2\e^{-\ll t},\ \ \ t\ge0
\end{equation}
for some $\ll>0.$ Next, for any $t\ge\tau$, by the It\^o formula,
(H1) and the Burkhold-Davis-Gundy inequality, we arrive at
\begin{equation*}
\begin{split}
\E\|X_t(\xi)-X_t(\eta)\|^2_\8&\le\E|X(t-\tau,\xi)-X(t-\tau,\eta)|^2+c\E\int_{t-\tau}^t|X(s,\xi)-X(s,\eta)|^2\d
s\\
&\quad+c\int_{t-\tau}^t\sup_{s-\tau\le r\le
s}\E|X(r,\xi)-X(r,\eta)|^2\d
s+\ff{1}{2}\E\|X_t(\xi)-X_t(\eta)\|^2_\8.
\end{split}
\end{equation*}
This, in addition to \eqref{B4}, gives that
\begin{equation}\label{B5}
\begin{split}
\E\|X_t(\xi)-X_t(\eta)\|^2_\8&\le c\|\xi-\eta\|_\8^2\e^{-\ll
(t-\tau)}+c\sup_{t-2\tau\le s\le t}\E|X(s,\xi)-X(s,\eta)|^2\\
&\le c\e^{-\ll t},\ \ \ t\ge\tau.
\end{split}
\end{equation}
Observe that \eqref{B5} still holds for $t\in[0,\tau].$ On the basis
of \eqref{B5}, we claim that $\pi(\cdot)\in\mathcal {P}(\C)$ is the
unique invariant measure. Indeed, if $\pi^\prime(\cdot)\in\mathcal
{P}(\C)$ is also an invariant measure, for any bounded Lipschitz
function $f:\C\mapsto\R$, by \eqref{B5}  and the invariance of
$\pi(\cdot),\pi^\prime(\cdot)\in\mathcal {P}(\C)$, it follows that
\begin{equation*}
|\pi(f)-\pi^\prime(f)|\le\int_{\C\times\C}| P_tf(\xi)-
P_tf(\eta)|\pi(\d\xi)\pi^\prime(\d\eta)\le c\e^{-\ll t},\ \ \ \
t\ge0.
\end{equation*}
As a result, one gets the uniqueness of invariant measures  by
taking $t\to\8$ and applying \cite[Proposition 2.2, p.3]{IW89} and
\cite[Lemma 7.1.5, p.125]{DZ} for any $f\in C_b(\C)$, the set of all
bounded, continuous real-valued functions on $\C.$

\noindent{\bf Step 3: Exponential Mixing.} By the invariance of
$\pi\in\mathcal {P}(\C)$, for any $F\in\B_b(\C)$, it follows that
\begin{equation*}
 |P_tF(\xi)-\pi(F)|\le\int_\C| P_tF(\xi)-P_tF(\eta)|\pi(\d\eta).
\end{equation*}
Thus, the desired assertion \eqref{B6} follows by taking \eqref{B5}
and \cite[Lemma 7.1.5, p.125]{DZ} into consideration.
\end{proof}

\begin{rem}
{\rm Let $\ll>0$  be an appropriate constant and $V:\R^n\mapsto\R_+$
a $C^2$-function. By applying the It\^o formula to $\e^{\ll t}V(x)$,
under some appropriate conditions, Es-Sarhir et al.
\cite[Proposition 2.1]{ESV} and Bo and Yuan \cite[Proposition]{BY}
showed the uniform boundedness of segment processes, where $\ll>0$
need to be sufficiently large in the argument of \cite[Proposition
2.1]{ESV}. Based on the uniform boundedness of segment processes,
they also discussed existence of an invariant measure for the
corresponding Markovian transition semigroup. Although the method
adopted therein applies to SDEs with
  constant delay, it seems not to work for the case of
variable time-lag since the differentiable property of delay
function is not available. While,   Theorem \ref{Erg} covers a wide
range of retarded SDEs  which include non-autonomous SDEs with
constant/variable/distributed delays as their special cases. }
\end{rem}

\section{Exponential Mixing for Neutral  SDEs}\label{sec:3}
In the previous section, we discuss the exponential ergodicity of
the Markovian transition semigroups generated by the associated
segment processes for a class of non-autonomous retarded SDEs. In
this section, we proceed to discuss the exponential mixing property
for another class of stochastic equation depending on past and
present values but that involves derivatives with delays as well as
the function itself. Such equations historically have been called
neutral SDEs, which have many applications in variational problems,
chemical engineering system and optimal stochastic control (see e.g.
\cite[Chapter 6]{M08}). By a close inspection of the argument of
Theorem \ref{Erg}, we note that the Arzel\`{a}--Ascoli method
adopted in Theorem \ref{Erg} seems hard to apply to  neutral SDEs
although it can deal with the non-autonomous cases. In this section,
we shall put forward another method, called
stability-in-distribution approach, to cope with the exponential
mixing for the neutral SDEs.

Consider a neutral SDE on $\R^n$
\begin{equation}\label{C1}
\d\{X(t)-G(X_t)\}=b(X_t)\d t+\si(X_t)\d W(t)
\end{equation}
with the initial value $X_0=\xi\in\C$ which is independent of
$\{W(t)\}_{t\ge0}$, where $G:\C\mapsto\R^n$ is measurable and
continuous such that $G(0)=0$, and $b:\C\mapsto\R^n,
\si:\C\mapsto\R^{n}\otimes\R^m$ are measurable and locally
Lipschitz.

For any $\phi,\psi\in\C$, we assume that
\begin{enumerate}
\item[(A$1$)] There exists $\kk\in(0,1)$ such that
$\E|G(\phi)-G(\psi)|\le\kk\sup_{-\tau\le\theta\le
0}\E|\phi(\theta)-\psi(\theta)|^2$.

\item[(A$2$)]

There exist $\aa_1>\aa_2>0$ such that
\begin{equation*}
\begin{split}
&\E\{2\<\phi(0)-\psi(0)-(G(\phi)-G(\psi)),b(\phi)-b(
\psi)\>+\|\si(\phi)-\si_2(\psi)\|^2\}\\
&\quad\le-\aa_1\E|\phi(0)-\psi(0)|^2+\aa_2\sup_{-\tau\le\theta\le0}\E|\phi(\theta)-\psi(\theta)|^2.
\end{split}
\end{equation*}
\item[(A$3$)] There exists $\aa_3>0$ such that
\begin{equation*}
\E\|\si(\phi)-\si(\psi)\|^2\le
\aa_3\sup_{-\tau\le\theta\le0}\E|\phi(\theta)-\psi(\theta)|^2.
\end{equation*}
\end{enumerate}

Under (A1)-(A2), \eqref{C1} has a unique strong solution
$\{X(t,\xi)\}_{t\ge0}$ with the initial data $\xi\in\C.$ Before the
statement of our main result,  we first provide a generalized
Razumikhin-type theorem (see e.g. \cite[Theorem 6.1, p.221]{M08})
which guarantees that the segment process admits a uniform bound
although the  equation \eqref{C1} need not admit an equilibrium.

\begin{lem}\label{Raz}
{\rm Let (A1) hold and assume further that there exist
$\dd\ge0,\ll>0$ such that
\begin{equation}\label{eq1}
\begin{split}
\E\{2\<\phi(0)-G(\phi),&b(\phi)\>+\|\si(\phi)\|^2\}\le\dd-\ll\E|\phi(0)-G(\phi)|^2
\end{split}
\end{equation}
provided that, for some $q>(1-\kk)^{-2}$,
\begin{equation}\label{eq2}
\E|\phi(\theta)|^2<q|\phi(0)-G(\phi)|^2,\ \ \ -\tau\le\theta\le0.
\end{equation}
Then there exists $\gg<\ll$  sufficiently small such that
\begin{equation}\label{C3}
\E|X(t)|^2\le \ff{\dd/\ll+\e^{-\gg
t}(1+\kk)^2\|\xi\|_\8^2}{(1-\kk\e^{\gg\tau/2})^2},\ \ \ t\ge-\tau.
\end{equation}
}
\end{lem}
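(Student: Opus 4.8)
The approach is a Razumikhin‑type first‑exit argument carried out on the semimartingale part $Z(t):=X(t)-G(X_t)$ of the equation, together with the near‑contractivity of $G$ to transfer the resulting exponential control back to $X$ itself; crucially, no equilibrium of \eqref{C1} is needed, only the one‑sided condition \eqref{eq1} under the Razumikhin constraint \eqref{eq2}. Since $\d Z(t)=b(X_t)\,\d t+\si(X_t)\,\d W(t)$, applying the It\^o formula to $\e^{\gg t}|Z(t)|^2$ and taking expectations gives, with $V(t):=\E|Z(t)|^2$,
\[
\e^{\gg t}V(t)=V(0)+\int_0^t\e^{\gg s}\Big(\gg V(s)+\E\big[2\<X(s)-G(X_s),b(X_s)\>+\|\si(X_s)\|^2\big]\Big)\,\d s ,
\]
the moment manipulations and the continuity of $t\mapsto\E[2\<X(t)-G(X_t),b(X_t)\>+\|\si(X_t)\|^2]$ being justified by the usual localization at the hitting times $\inf\{t\ge0:|X(t)|\ge n\}$ followed by $n\to\8$ and Fatou's lemma. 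Here $\gg\in(0,\ll)$ is a small constant, fixed at the end, and $V(0)=|\xi(0)-G(\xi)|^2\le(1+\kk)^2\|\xi\|_\8^2$ by (A1).

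Next I would record a recovery estimate passing from bounds on $V$ to bounds on $\E|X|^2$. Writing $X(s)=Z(s)+G(X_s)$, using $\E|G(X_s)|^2\le\kk^2\sup_{s-\tau\le r\le s}\E|X(r)|^2$ (the content of (A1)), and taking the supremum over $s\in[-\tau,t]$ of $\e^{\gg s/2}\big(\E|X(s)|^2\big)^{1/2}$, one finds, as soon as $\gg$ is small enough that $\kk\e^{\gg\tau/2}<1$,
\[
\e^{\gg t}\E|X(t)|^2\ \le\ \sup_{-\tau\le s\le t}\e^{\gg s}\E|X(s)|^2\ \le\ \frac{(1+\kk)^2\|\xi\|_\8^2\ \vee\ \sup_{0\le s\le t}\e^{\gg s}V(s)}{(1-\kk\e^{\gg\tau/2})^2}\ .
\]
Hence it suffices to prove that $\e^{\gg t}V(t)\le\e^{\gg t}\dd/\ll+(1+\kk)^2\|\xi\|_\8^2$ for all $t\ge0$: its right‑hand side is nondecreasing in $t$, so it also dominates $\sup_{0\le s\le t}\e^{\gg s}V(s)$, the maximum in the numerator collapses, and \eqref{C3} follows for $t\ge0$; for $t\in[-\tau,0]$ the bound is immediate from $\E|X(t)|^2=|\xi(t)|^2\le\|\xi\|_\8^2$.

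The heart of the matter is the Razumikhin argument for that inequality. Fix $\vv>0$, set $C_\vv:=(1+\kk)^2\|\xi\|_\8^2+\vv$ and $H(t):=\e^{\gg t}V(t)-\e^{\gg t}\dd/\ll$, a continuous function with $H(0)<C_\vv$. Suppose the open set $\{t\ge0:H(t)>C_\vv\}$ is nonempty and let $t^*$ be its infimum; then $t^*>0$, $H(t^*)=C_\vv$ (so $V(t^*)=\dd/\ll+\e^{-\gg t^*}C_\vv$), and $H(s)\le C_\vv$ for $s\le t^*$, whence $\sup_{0\le r\le s}\e^{\gg r}V(r)\le C_\vv+\e^{\gg s}\dd/\ll$ for all $s\le t^*$. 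Inserting this into the recovery estimate and using $t^*+\theta\ge t^*-\tau$ gives, for $\theta\in[-\tau,0]$ with $t^*+\theta\ge0$,
\[
\E|X(t^*+\theta)|^2\ \le\ \frac{\e^{\gg\tau}\e^{-\gg t^*}C_\vv+\dd/\ll}{(1-\kk\e^{\gg\tau/2})^2}\ ,
\]
while the easier bound $\E|X(t^*+\theta)|^2=|\xi(t^*+\theta)|^2\le\|\xi\|_\8^2$ handles the part of the window lying in $[-\tau,0]$. Since $q>(1-\kk)^{-2}$, one may now take $\gg$ so small that, in addition, $\gg<\ll$ and $q(1-\kk\e^{\gg\tau/2})^2>\e^{\gg\tau}$; because $\e^{-\gg t^*}C_\vv>0$ (this is where $\vv>0$ enters), both of the last two right‑hand sides are then strictly less than $q(\e^{-\gg t^*}C_\vv+\dd/\ll)=qV(t^*)$, so the Razumikhin condition \eqref{eq2} holds at $t^*$ with $\phi=X_{t^*}$. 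Hence \eqref{eq1} yields $\E[2\<X(t^*)-G(X_{t^*}),b(X_{t^*})\>+\|\si(X_{t^*})\|^2]\le\dd-\ll V(t^*)$, and plugging this into the It\^o identity of the first step, the value of $\frac{\d}{\d t}\big[\e^{\gg t}V(t)\big]$ at $t=t^*$ is at most $\e^{\gg t^*}\big(\dd-(\ll-\gg)V(t^*)\big)$; substituting $V(t^*)=\dd/\ll+\e^{-\gg t^*}C_\vv$ collapses this to $H'(t^*)\le-(\ll-\gg)C_\vv<0$, which contradicts $H'(t^*)\ge0$ — the latter being forced at a first up‑crossing of the level $C_\vv$. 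Therefore $H\le C_\vv$ on $[0,\8)$, and letting $\vv\downarrow0$ yields $\e^{\gg t}V(t)\le\e^{\gg t}\dd/\ll+(1+\kk)^2\|\xi\|_\8^2$, hence \eqref{C3}.

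The main obstacle is closing this last argument quantitatively: one must check that the Razumikhin margin $q>(1-\kk)^{-2}$ is precisely what absorbs the loss $(1-\kk\e^{\gg\tau/2})^{-2}$ from the recovery estimate together with the factor $\e^{\gg\tau}$ lost by looking back over the delay window, and that $\gg$ can be taken small enough for this uniformly in $\xi$, in $t^*$, and in $\vv$. The localization legitimizing the It\^o identity, and the resulting $C^1$ regularity of $t\mapsto\e^{\gg t}V(t)$ needed to make the last step a pointwise comparison of derivatives, are routine but should be spelled out.
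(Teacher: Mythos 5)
Your proposal is correct and follows essentially the same route as the paper: the same auxiliary quantity $\E|X(t)-G(X_t)|^2$, the same recovery estimate via $(a+b)^2\le a^2/(1-\vv)+b^2/\vv$ with $\vv=\kk\e^{\gg\tau/2}$, the same smallness conditions \eqref{C10} on $\gg$, and the same Razumikhin contradiction at a first crossing of the level $(1+\kk)^2\|\xi\|_\8^2$. The only difference is cosmetic: you perturb the level by $\vv>0$ and contradict the sign of $H'(t^*)$ pointwise, whereas the paper works on a small interval $[\rho,\rho+h]$ and contradicts the integrated inequality \eqref{C6} with \eqref{C12}.
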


\begin{proof}
 By the elemental inequality:
\begin{equation}\label{eq9}
(a+b)^2\le a^2/(1-\vv)+b^2/\vv,\ \ \ a,b\in\R,\ \vv\in(0,1),
\end{equation}
for any $\gg>0$ and $t\ge 0$, we deduce from (A1) that
\begin{equation}\label{C2}
\begin{split}
\sup_{0\le s\le t}\Big(\e^{\gg s}\E|X(s)|^2\Big)
&\le\ff{1}{1-\vv}\sup_{0\le s\le t}\Big(\e^{\gg
s}\E|X(s)-G(X_s)|^2\Big)\\
&\quad+\ff{\kk^2}{\vv}\e^{\gg\tau}\sup_{-\tau\le s\le t}\Big(\e^{\gg
s}\E|X(s)|^2\Big).
\end{split}
\end{equation}
Due to $q>(1-\kk)^{-2}$ and $\kk\in(0,1)$, there exists $\gg<\ll$
sufficiently small such that
\begin{equation}\label{C10}
\kk\e^{\gg\tau/2}<1 \ \ \mbox{ and } \ \ \ff{\e^{\gg\tau
}}{(1-\kk\e^{\gg\tau/2})^2}<q.
\end{equation}
For $\gg>0$ sufficiently small  such that \eqref{C10} holds, if
\begin{equation}\label{C4}
\e^{\gg t}\E|X(t)-G(X_t)|^2\le\ff{\dd}{\ll}\e^{\gg
t}+(1+\kk)^2\|\xi\|_\8^2,\ \ \ t\ge0,
\end{equation}
 then \eqref{C2} gives that
\begin{equation*}
\begin{split}
\sup_{-\tau\le s\le t}\Big(\e^{\gg s}\E|X(s)|^2\Big)
&\le\ff{1}{1-\vv}\sup_{0\le s\le t}\Big(\ff{\dd}{\ll}\e^{\gg
s}+(1+\kk)^2\|\xi\|_\8^2\Big)+\ff{\kk^2}{\vv}\e^{\gg\tau}\sup_{-\tau\le
s\le t}\Big(\e^{\gg s}\E|X(s)|^2\Big).
\end{split}
\end{equation*}
Thus \eqref{C3} follows by taking $\vv=\kk\e^{\gg\tau/2}$. In what
follows, under (A1) and \eqref{eq1} we verify by a contradiction
argument that \eqref{C4} is indeed true for  sufficiently small
$\gg>0$ to be determined. Note from (A1) that
\begin{equation*}
|\xi(0)-G(\xi)|^2\le(1+\kk)^2\|\xi\|_\8^2.
\end{equation*}
If \eqref{C4} is not true, then there exist $\rho>0$ and
sufficiently small $h>0$ such that
\begin{equation}\label{C5}
\begin{split}
\e^{\gg t}\E|X(t)-G(X_t)|^2-\ff{\dd}{\ll}\e^{\gg t}&\le\e^{\gg
\rho}\E|X(\rho)-G(X_\rho)|^2-\ff{\dd}{\ll}\e^{\gg
\rho}\\
&=(1+\kk)^2\|\xi\|_\8^2,\ \ \ 0\le t\le\rho,
\end{split}
\end{equation}
however,
\begin{equation}\label{C6}
\begin{split}
\e^{\gg t}\E|X(t)-G(X_t)|^2-\ff{\dd}{\ll}\e^{\gg t}>\e^{\gg
t}\E|X(\rho)-G(X_\rho)|^2-\ff{\dd}{\ll}\e^{\gg \rho},\ \
\rho<t\le\rho+h.
\end{split}
\end{equation}
Taking \eqref{C3} and \eqref{C5} into account, we derive that
\begin{equation*}
\begin{split}
\E|X(t)|^2 &\le\ff{\dd/\ll+\e^{-\gg t}(\e^{\gg
\rho}\E|X(\rho)-G(X_\rho)|^2-\ff{\dd}{\ll}\e^{\gg
\rho})}{(1-\kk\e^{\gg\tau/2})^2},\ \ \ -\tau\le t\le \rho,
\end{split}
\end{equation*}
which, in particular, yields that
\begin{equation}\label{C9}
\begin{split}
\E|X(\rho+\theta)|^2 &\le \ff{\dd/\ll+\e^{-\gg
(\rho+\theta)}(\e^{\gg
\rho}\E|X(\rho)-G(X_\rho)|^2-\ff{\dd}{\ll}\e^{\gg
\rho})}{(1-\kk\e^{\gg\tau/2})^2}\\
&\le\ff{\e^{\gg\tau
}\E|X(\rho)-G(X_\rho)|^2}{(1-\kk\e^{\gg\tau/2})^2},\ \ \
-\tau\le\theta\le0,
\end{split}
\end{equation}
due to $\ff{\dd}{\ll}(1-\e^{-\gg\theta})\le0$ for
$-\tau\le\theta\le0.$ Thus we get from \eqref{eq2}, \eqref{C10} and
\eqref{C9}  that
\begin{equation}\label{C11}
\E\{2\<X(t)-G(X_t),b(X_t)\>+\|\si(X_t)\|^2\}\le\dd-\gg\E|X(t)-G(X_t)|^2,\
\ \ \rho\le t\le\rho+h
\end{equation}
by virtue of the continuity of sample path, where $h>0$ is
sufficiently small. Next, applying the It\^o formula and using
\eqref{C11} yields that
\begin{equation}\label{C12}
\begin{split}
\E(\e^{\gg(\rho+h)}|X(\rho+h)-G(X_{\rho+h})|^2)-\ff{\dd}{\ll}\e^{\gg(\rho+h)}
&\le\E(\e^{\gg\rho}|X(\rho)-G(X_{\rho})|^2)-\ff{\dd}{\ll}\e^{\gg
\rho}.
\end{split}
\end{equation}
Finally  we conclude that \eqref{C4} holds by the contradiction
between \eqref{C6} and \eqref{C12}.
\end{proof}

Our main result in this section is presented as below.

\begin{thm}\label{neutral}
{\rm Let (A1)-(A3) hold and $ \kk\in(0,1/2) $ and $
\aa_1>\aa_2/(1-2\kk)^2. $ Assume further that
\begin{equation}\label{eq13}
|G(\phi)-G(\psi)|\le\kk\|\phi-\psi\|_\8,\ \ \ \phi,\psi\in\C.
\end{equation}
 Then, \eqref{C1} has a unique
 invariant measure $\pi(\cdot)\in\mathcal {P}(\C)$ and is
 exponentially mixing. That is, there exists $\ll>0$ such
 that
\begin{equation*}
|P_tF(\xi)-\pi(F)|\le c\e^{-\ll t},\ \ \ t\ge0,\ F\in\B_b(\C),\
\xi\in\C.
\end{equation*}
}
\end{thm}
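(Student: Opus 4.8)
The plan is to mimic the three-step structure of the proof of Theorem~\ref{Erg}, replacing the Arzel\`a--Ascoli tightness argument by a stability-in-distribution argument, and using Lemma~\ref{Raz} in place of Lemma~\ref{uni} to control the segment process. First I would record that, under (A1)--(A3), the hypotheses of Lemma~\ref{Raz} are met: condition \eqref{eq2} together with (A2) (applied with $\psi\equiv0$, using $G(0)=0$) yields the dissipativity estimate \eqref{eq1} for suitable $\dd\ge0$ and $\ll>0$, so that Lemma~\ref{Raz} gives $\sup_{t\ge-\tau}\E|X(t,\xi)|^2<\8$, and then a standard It\^o-plus-Burkholder--Davis--Gundy computation (exactly as in the end of the proof of Lemma~\ref{uni}) upgrades this to a uniform bound $\sup_{t\ge-\tau}\E\|X_t(\xi)\|_\8^2<\8$, with a bound that is linear in $\|\xi\|_\8^2$.

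\textbf{Step 1: Uniqueness and exponential contraction in distribution.} Let $X(t,\xi)$ and $X(t,\eta)$ be two solutions driven by the same Wiener process with initial data $\xi,\eta\in\C$. Write $Y(t):=X(t,\xi)-X(t,\eta)$ and $Z(t):=Y(t)-(G(X_t(\xi))-G(X_t(\eta)))$. Applying the It\^o formula to $|Z(t)|^2$ and using (A2) gives
\[
\E|Z(t)|^2\le|\xi(0)-\eta(0)-(G(\xi)-G(\eta))|^2-\aa_1\int_0^t\E|Y(s)|^2\d s+\aa_2\int_0^t\sup_{s-\tau\le r\le s}\E|Y(r)|^2\d s .
\]
The difficulty is that the Halanay-type Lemma~\ref{Halanay} is stated for $u(t)$ itself, whereas here the differential inequality controls $\E|Z(t)|^2$, not $\E|Y(t)|^2$. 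To pass between the two I would use \eqref{eq13}: from $|G(\phi)-G(\psi)|\le\kk\|\phi-\psi\|_\8$ and the elementary inequality \eqref{eq9} one gets, for any $\vv\in(0,1)$,
\[
\E|Y(t)|^2\le\ff{1}{1-\vv}\E|Z(t)|^2+\ff{\kk^2}{\vv}\sup_{-\tau\le s\le t}\E|Y(s)|^2,
\]
and choosing $\vv=\kk$ (legitimate since $\kk\in(0,1/2)\subset(0,1)$) gives $\sup_{-\tau\le s\le t}\E|Y(s)|^2\le(1-\kk)^{-2}\big(\sup_{0\le s\le t}\E|Z(s)|^2\vee\|\xi-\eta\|_\8^2\big)$, roughly speaking. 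Feeding this back into the integral inequality for $\E|Z(t)|^2$ and differentiating yields $v'(t)\le-\aa_1 v(t)+\aa_2(1-2\kk)^{-2}\sup_{t-\tau\le s\le t}v(s)$ for an auxiliary quantity $v$ comparable to $\E|Y(t)|^2$; since $\aa_1>\aa_2/(1-2\kk)^2$ by hypothesis, Lemma~\ref{Halanay} applies and gives
\[
\E|X(t,\xi)-X(t,\eta)|^2\le c\,\|\xi-\eta\|_\8^2\,\e^{-\ll t},\qquad t\ge0,
\]
for some $\ll>0$. The main obstacle here is bookkeeping the two auxiliary variables $Y$ and $Z$ carefully enough that the effective decay rate condition ends up being exactly $\aa_1>\aa_2/(1-2\kk)^2$; the factor $(1-2\kk)^2$ (rather than $(1-\kk)^2$) comes from applying the $\vv$-splitting twice, once inside (A2) via the $\phi(0)-\psi(0)-(G(\phi)-G(\psi))$ cross term and once to recover $Y$ from $Z$.

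\textbf{Step 2: From pointwise to segment contraction.} For $t\ge\tau$, applying the It\^o formula to $|Z(s)|^2$ on $[t-\tau,t+\theta]$, taking suprema over $\theta\in[-\tau,0]$, and using (A2), (A3), \eqref{eq13} and the Burkholder--Davis--Gundy inequality, together with the now-available bound $\sup_t\E\|X_t(\xi)\|_\8^2<\8$, I would obtain (exactly as in \eqref{B5} in the proof of Theorem~\ref{Erg}, with the extra $G$-terms absorbed using $\kk<1/2$ and \eqref{eq9}) that
\[
\E\|X_t(\xi)-X_t(\eta)\|_\8^2\le c\,\e^{-\ll t},\qquad t\ge0 .
\]

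\textbf{Step 3: Existence, uniqueness and exponential mixing.} For existence of an invariant measure I would run the Krylov--Bogoliubov scheme with $\mu_n(\cdot)=\ff1n\int_0^n\P_t(\xi,\cdot)\d t$: tightness of $\{\mu_n\}$ follows from the stability-in-distribution estimate of Step~2, since $\{P_t(\xi,\cdot)\}_{t\ge0}$ is Cauchy in the Wasserstein distance and hence relatively compact in $\mathcal P(\C)$, so the Ces\`aro averages are tight; any weak limit is invariant because $P_t$ is Feller by \cite[Theorem~3.1, p.67]{M08}-type results. Uniqueness and the mixing bound then follow verbatim from Step~2: for any invariant $\pi$ and any bounded Lipschitz $F$,
\[
|P_tF(\xi)-\pi(F)|\le\int_\C|P_tF(\xi)-P_tF(\eta)|\,\pi(\d\eta)\le c\,\e^{-\ll t},
\]
and a density/monotone-class argument together with \cite[Lemma~7.1.5, p.125]{DZ} extends the bound from Lipschitz $F$ to all $F\in\B_b(\C)$, giving uniqueness by letting $t\to\8$ and the claimed exponential mixing. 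I expect Step~1 --- in particular producing the exact threshold $\aa_1>\aa_2/(1-2\kk)^2$ while juggling the neutral term --- to be the crux; Steps~2 and~3 are routine adaptations of the corresponding steps in Theorem~\ref{Erg}.
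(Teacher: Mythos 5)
Your overall architecture is sound and close to the paper's: moment bounds via Lemma \ref{Raz}, a pointwise exponential contraction, an upgrade to the segment norm, and then the generic uniqueness/mixing argument recycled from Theorem \ref{Erg}. Two of your deviations are genuine but workable alternatives. For the pointwise decay you run a Halanay argument (Lemma \ref{Halanay}) on the corrected process $Z$, whereas the paper runs a Razumikhin argument in the style of \cite[Corollary 6.6, p.227]{M08} together with Lemma \ref{Raz}; and for existence you use Krylov--Bogoliubov with a Wasserstein--Cauchy tightness claim, whereas the paper deduces weak convergence of $\P(t,\xi,\cdot)$ itself from the conditions $(\N1)$--$(\N2)$ via \cite[Theorem 3.2]{YZM}. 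Both substitutions are defensible, but note two points: you should invoke (A1) (stated as a supremum of expectations) rather than the pathwise bound \eqref{eq13} when you claim $\E|Y(t)|^2\le\ff{1}{1-\vv}\E|Z(t)|^2+\ff{\kk^2}{\vv}\sup_{s}\E|Y(s)|^2$, since \eqref{eq13} only produces $\E\|Y_t\|_\8^2$ on the right; and you acknowledge but never actually produce the stated threshold $\aa_1>\aa_2/(1-2\kk)^2$ --- with $\vv=\kk$ in both splittings the Halanay condition comes out strictly stronger, so this step is asserted rather than proved.

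The concrete gap is Step 2. You cannot pass from the Burkholder--Davis--Gundy estimate for $\Gamma(s,\xi,\eta)=Y(s)-(G(X_s(\xi))-G(X_s(\eta)))$ to a bound on $\E\|X_t(\xi)-X_t(\eta)\|_\8^2$ ``exactly as in \eqref{B5}'' with the $G$-terms absorbed in one pass: by \eqref{eq9} and \eqref{eq13},
\begin{equation*}
\E\|Y_{n\tau}\|_\8^2\le\kk\,\E\|Y_{n\tau}\|_\8^2+\kk\,\E\|Y_{(n-1)\tau}\|_\8^2+(\text{terms already controlled}),
\end{equation*}
so after absorbing the first term you are left with a recursion $\E\|Y_{n\tau}\|_\8^2\le\ff{\kk}{1-\kk}\E\|Y_{(n-1)\tau}\|_\8^2+\cdots$ that must be iterated over all $\tau$-blocks. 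This is precisely the induction \eqref{eq12}--\eqref{eq11} in the paper; it is where $\kk<1/2$ (i.e.\ $\kk/(1-\kk)<1$) actually enters, and it degrades the final rate to $\e^{-(p\wedge1)\gg t}$ with $p=\ff{1}{\gg\tau}\log\ff{1-\kk}{\kk}$ by balancing the geometric factor against $\e^{-n\gg\tau}$. The same block induction is needed to obtain the uniform segment bound $(\N1)$ from \eqref{eq7}. As written, your Step 2 estimate is not established, and the existence, uniqueness and mixing conclusions all rest on it.
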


\begin{proof}
By Yuan et al. \cite[Theorem 3.2]{YZM}, if, for  a bounded subset
$U\subset\C$,
\begin{enumerate}
\item[($\N1$)] $\sup_{t\ge0}\sup_{\xi\in U}\E\|X_t(\xi)\|_\8^2<\8$;
\item[($\N2$)] $\lim_{t\to\8}\sup_{\xi,\eta\in
U}\E\|X_t(\xi)-X_t(\eta)\|_\8^2=0$,
\end{enumerate}
then $\P(t,\xi,\cdot)$ converges weakly to $\pi(\cdot)\in\mathcal
{P}(\C)$. For any $F\in C_b(\C)$   and $t,s\ge0$, by the Markovian
property of $\{X_t(\xi)\}_{t\ge0}$, one has
\begin{equation*}
P_{t+s}F(\xi)=P_sP_tF(\xi).
\end{equation*}
For fixed $t\ge0$, taking $s\to\8$ gives that
\begin{equation*}
\pi(F)=\pi(P_tF)
\end{equation*}
whenever $\P(t,\xi,\cdot)$ converges weakly to
$\pi(\cdot)\in\mathcal {P}(\C)$. Hence, \eqref{C1} admits an
invariant measure provided that $(\N1)$ and $(\N2)$ hold
respectively. In what follows, we claim that $(\N1)$ and $(\N2)$
hold under the conditions imposed. Following a similar argument to
that of \cite[Corollary 6.6, p.227]{M08} and taking Lemma \ref{Raz}
into consideration, we deduce that there exists $\gg>0$ sufficiently
small such that
\begin{equation}\label{eq4}
\sup_{t\ge-\tau}\E|X(t,\xi)|^2<\8 \ \mbox{ and }\
\E|X(t,\xi)-X(t,\eta)|^2\le c \e^{-\gg t},\ \ t\ge0.
\end{equation}
By the Burkhold-Davis-Gundy inequality, (A2)-(A3), for any
$t\ge2\tau$   we obtain that
\begin{equation}\label{eq5}
\begin{split}
\E\Big(\sup_{t-\tau\le s\le
t}|\Lambda(s,\xi)|^2\Big)&\le2\E(|\Lambda(t-\tau,\xi)|^2)+c\int_{t-\tau}^t\Big\{1+\sup_{-\tau\le
r\le s}\E|X(r,\xi)|^2\Big\}\d s
\end{split}
\end{equation}
with $\Lambda(t,\xi):=X(t,\xi)-G(X_t(\xi))$, and
\begin{equation}\label{eq6}
\begin{split}
\E\Big(\sup_{t-\tau\le s\le
t}|\Gamma(s,\xi,\eta)|^2\Big)&\le2\E(|\Gamma(t-\tau,\xi,\eta)|^2)+c\int_{t-\tau}^t\sup_{s-\tau\le
r\le s}\E|X(r,\xi)-X(r,\eta)|^2\d s,
\end{split}
\end{equation}
where $
\Gamma(t,\xi,\eta):=X(t,\xi)-X(t,\eta)-(G(X_t(\xi))-G(X_t(\eta))). $
Thus, the inequality \eqref{eq9}, (A1) and \eqref{eq4} give that
\begin{equation}\label{eq7}
\dd:=\sup_{t\ge-\tau}\E\Big(\sup_{t-\tau\le s\le
t}|\Lambda(s,\xi)|^2\Big)<\8
\end{equation}
and
\begin{equation}\label{eq8}
\E\Big(\sup_{t-\tau\le s\le t}|\Gamma(s,\xi,\eta)|^2\Big)\le
c\e^{-\gg t},\ \ \ t\ge2\tau.
\end{equation}
For any integer $n\ge2$, note from \eqref{eq9}, \eqref{eq13},  and
\eqref{eq7} that
\begin{equation*}
\begin{split}
\E\|X_{n\tau}(\xi)\|^2_\8&\le \ff{1}{\kk}\E\Big(\sup_{(n-1)\tau\le
s\le
n\tau}|G(X_s(\xi))|^2\Big)+\ff{\dd}{1-\kk}\\
&\le\kk\E\|X_{n\tau}(\xi)\|^2_\8+\kk\E\|X_{(n-1)\tau}(\xi)\|^2_\8+\ff{\dd}{1-\kk}.
\end{split}
\end{equation*}
By virtue of an induction argument,  due to $\kk\in(0,1/2)$, one
derive that
\begin{equation}\label{eq12}
\begin{split}
\E\|X_{n\tau}(\xi)\|^2_\8&\le\ff{\kk}{1-\kk}\E\|X_{(n-1)\tau}(\xi)\|^2_\8+\ff{\dd}{(1-\kk)^2}\\
&\le\Big(\ff{\kk}{1-\kk}\Big)^n\|\xi\|_\8^2+\ff{\dd}{(1-\kk)^2}\Big\{1+\ff{\kk}{1-\kk}+\cdots+\Big(\ff{\kk}{1-\kk}\Big)^{n-1}\Big\}\\
&\le\|\xi\|_\8^2+\ff{\dd}{(1-\kk)(1-2\kk)}.
\end{split}
\end{equation}
Observe that for any $t\ge0$ there exists an $n\ge0$ such that
$t\in[n\tau,(n+1)\tau)$ and
\begin{equation*}
\E\|X_t(\xi)\|^2_\8\le\E\|X_{n+1}(\xi)\|^2_\8+\E\|X_n(\xi)\|^2_\8.
\end{equation*}
Then $(\N1)$ follows immediately from \eqref{eq12}. On the other
hand, by  \eqref{eq9}, \eqref{eq13} and \eqref{eq8}, for any integer
$n\ge2$, it follows that
\begin{equation*}
\begin{split}
\E\|X_{n\tau}(\xi)-X_{n\tau}(\eta)\|^2_\8&\le
\ff{1}{\kk}\E\Big(\sup_{(n-1)\tau\le s\le
n\tau}|G(X_s(\xi))-G(X_s(\eta))|^2\Big)+\ff{c\e^{-n\gg \tau}}{1-\kk}\\
&\le\kk\E\|X_{n\tau}(\xi)-X_{n\tau}(\eta)\|^2_\8+\kk\E\|X_{(n-1)\tau}(\xi)-X_{(n-1)\tau}(\eta)\|^2_\8+\ff{c\e^{-n\gg
\tau}}{1-\kk}.
\end{split}
\end{equation*}
Also by an induction argument, we obtain that
\begin{equation}\label{eq11}
\begin{split}
\E\|X_{n\tau}(\xi)-X_{n\tau}(\eta)\|^2_\8 &\le
\Big(\ff{\kk}{1-\kk}\Big)^n\|\xi-\eta\|^2_\8+\ff{c}{(1-\kk^2)}\Big\{\Big(\ff{\kk}{1-\kk}\Big)^{n-1}\e^{-\gg
\tau}\\
&\quad+\Big(\ff{\kk}{1-\kk}\Big)^{n-2}\e^{-2\gg
\tau}+\cdots+\e^{-n\gg \tau}\Big\}\\
&\le\Big(\ff{\kk}{1-\kk}\Big)^n\|\xi-\eta\|^2_\8+\ff{\e^{-n\gg\tau}(1-q^n)}{1-q}\\
&\le\|\xi-\eta\|^2_\8\e^{-pn\gg\tau}+\ff{\e^{-n\gg\tau}}{1-q}\\
&\le c\e^{-(p\wedge1)n\gg\tau},
\end{split}
\end{equation}
where
\begin{equation*}
p:=\ff{1}{\gg\tau}\log\Big(\ff{1-\kk}{\kk}\Big) \mbox{ and }
q:=\kk\e^{\gg\tau}/(1-\kk)<1
\end{equation*}
 for $\kk\in(0,1/2)$ and $\gg>0$
sufficiently small. Next,  for any $t>0$,   notice that there exists
$n\ge0$ such that $t\in[n\tau,(n+1)\tau)$ and by \eqref{eq11} that
\begin{equation*}
\begin{split}
\E\|X_t(\xi)-X_t(\eta)\|^2_\8&\le\E\|X_{n+1}(\xi)-X_{n+1}(\eta)\|^2_\8+\E\|X_n(\xi)-X_n(\eta)\|^2_\8\\
&\le
c\e^{-(p\wedge1)(n+1)\gg\tau}+c\e^{(p\wedge1)\gg\tau}\e^{-(p\wedge1)(n+1)\gg\tau}\\
&\le c\e^{-(p\wedge1)\gg t}.
\end{split}
\end{equation*}
Consequently, $(\N2)$ holds.  Finally the desired assertion follows
by repeating the latter proof of Theorem \ref{Erg}.
\end{proof}

\begin{rem}
{\rm There are some examples such that (A1) and \eqref{eq13} hold,
e.g., $G(\phi)=\kk\phi(-\tau)$,  $G(\phi)=\kk\phi(-\tau(t))$ and
$G(\phi)=\kk\int_{-\tau}^0\phi(\theta)\mu(\d\theta)$ for $\phi\in\C$
and  $\kk\in(0,1/2)$, where $\mu(\cdot)$ is a probability measure on
$[-\tau,0].$ }
\end{rem}

\begin{rem}
{\rm By a generalized  Razumikhin-type theorem, we give a uniform
bound of segment processes associated with neutral SDEs, while the
method adopted in Lemma \ref{uni} seems hard to work  because of the
appearance of neutral term. Moreover, the Arzel\`{a}--Ascoli method
utilized in Theorem \ref{Erg} does not apply to neutral SDEs either
although it can deal with the {\it non-autonomous} retarded SDEs.
The trick applied in Theorem \ref{neutral} is call a
``stability-in-distribution approach'' and our main result, Theorem
\ref{neutral}, includes neutral SDEs with
constant/variable/distributed time-lags.

}
\end{rem}

\begin{rem}
{\rm Let $(H,\<\cdot,\cdot\>_H,\|\cdot\|_H)$ be a real separable
Hilbert space and $V$ a Banach space  such that $V\hookrightarrow H$
continuously and densely.  Via the Riesz isomorphism,
\begin{equation*}
V\hookrightarrow H\equiv H^*\hookrightarrow V^*,
\end{equation*}
where $H^*$ and $V^*$ are the dual space of $H$ and $V$
respectively. Stability-in-distribution approach can be applied to a
non-linear retarded Stochastic Partial Differential Equation (SPDE)
on the Gelfand triple $(V,H,V^*)$
\begin{equation}\label{02}
\d X(t)=\{A(X(t))+b(X_t)\}\d t+\si(X_t)\d W(t),
\end{equation}
where $A:V\mapsto V^\prime$ is  a family of nonlinear monotone and
coercive operators. However, the Arzel\`{a}--Ascoli approach seems
hard to apply to \eqref{02} because of the monotone and coercive
property of $A.$ Therefore, the the Arzel\`{a}--Ascoli method and
the stability-in-distribution approach possess their respective
advantages. }
\end{rem}

\section{Exponential Mixing for Retarded SDEs with
Jumps}\label{sec:4} In the last two sections, we investigate the
ergodic property of retarded SDEs with continuous sample paths under
the uniform topology. In this section, we turn to the case of
retarded SDEs with discontinuous paths.

We further need to introduce some additional notation and notions.
Let $\D:=D([-\tau, 0];\R^n)$ denote the collection of all
c\`{a}dl\`{a}g paths $f: [-\tau,0]\mapsto\R^n$. Recall that a path
$f: [-\tau,0]\mapsto\R^n$ is called c\'{a}dl\'{a}g if it is
right-continuous having finite left-hand limits. Let $\LL$ denote
the class of   increasing homeomorphisms,  and
\begin{equation*}
\|\ll\|^\circ:=\sup_{-\tau\le
s<t\le0}\Big|\log\ff{\ll(t)-\ll(s)}{t-s}\Big|<\8.
\end{equation*}
Under the uniform metric
$\|\zeta\|_\8:=\sup_{-\tau\le\theta\le0}|\zeta(\theta)|$ for each
$\zeta\in\D$, the space $\D$ is complete but not separable. For any
$\xi,\eta\in\D$, define the Skorhod metric $\d_S$ on $\D$ by
\begin{equation}\label{03}
\d_S(\xi,\eta):=\inf_{\ll\in\LL}\{\|\ll\|^\circ\vee\|\xi-\eta\circ\ll\|_\8\},
\end{equation}
where $\eta\circ\ll$ means the composition of mappings $\eta$ and
$\ll$. Under the skorohod metric $\d_S$,  $\D$ is not only complete
but also separable (see e.g. \cite[Theorem 12.2, p.128]{B}). For the
space $\D$, the uniform metric $\|\cdot\|_\8$ may lead to certain
misinterpretation of the actual situation while the Skorohod metric
$\d_S$ in the mathematical modeling of a certain processes gives a
more accurate representation of the processes, allows researches to
perform a correct analysis of a real situation, and make a credible
forecast of the possible outcomes of similar processes. For more
details on   the Skorohod Metric, we refer to \cite[Chapter 4]{B}.
Let $(\Y,\B(\Y),m(\cdot))$ be a measurable space, $D_p$ a countable
subset of $\R_+$ and $p:D_p\mapsto \Y$ an adapted process taking
value in $\Y$. Then, as in Ikeda and Watanabe \cite[p.59]{IW89}, the
Poisson random measure $N(\cdot,\cdot):\B(\R_+\times\Y)\times
\OO\mapsto\mathbb{N}\cup\{0\}$,  defined on the complete filtered
probability space $(\OO,\F,\{\F_t\}_{t\ge0},\P)$,  can be
represented by
\begin{equation*}
N((0,t]\times\GG)=\sum_{s\in D_p,s\le t}{\bf1}_\GG(p(s)),\ \ \
\GG\in\B(\Y).
\end{equation*}
In this case, we say that $p$ is a Poisson point process and $N$ is
a Poisson random measure. Let $m(\cdot):=\E N((0,1]\times\cdot)$.
Then, the compensated Poisson random measure
\begin{equation*}
\tt N(\d t,\d z):=N(\d t,\d z)-\d t  m(\d z) \ \ \mbox{ is a
martingale}.
\end{equation*}
A stochastically continuous Markovian semigroup $P_t$ is called
eventually Feller if $P_tf\in C_b(\C)$ for any $F\in C_b(\C)$ and
$t\ge t_0,$ where $t_0\ge0$ is some constant, and immediately Feller
for $t_0=0$.

Consider a non-autonomous retarded SDE with jump
\begin{equation}\label{b1}
\d X(t)=b(t,X_t)\d t+\int_\GG\si(t,X_{t-},z)\tt N(\d t,\d z),\ \ \
t\ge0
\end{equation}
with the initial value $\xi\in\D$ which is independent of
$N(\cdot,\cdot)$, where
$X_{t-}(\theta):=X((t+\theta)-):=\lim_{s\uparrow t+\theta}X(s)$ for
$\theta\in[-\tau,0]$, $b:[0,\8)\times\D\times\OO\mapsto\R^n$ and
$\si:[0,\8)\times\D\times\OO\mapsto\R^n\times\GG\mapsto\R^n$ are
progressively measurable.

For any $\phi,\psi\in\D$ and any $t\ge0$, we assume that
\begin{enumerate}
\item[(B$1$)] There exist $\aa_1>\aa_2>0$ such that
\begin{equation*}
\begin{split}
&\E\Big\{2\<\phi(0)-\psi(0),b(t,\phi)-b(t,
\psi)\>+\int_\GG|\si(t,\phi,z)-\si(t,\psi,z)|^2m(\d z)\Big\}\\
&\quad\le-\aa_1\E|\phi(0)-\psi(0)|^2+\aa_2\sup_{-\tau\le\theta\le0}\E|\phi(\theta)-\psi(\theta)|^2;
\end{split}
\end{equation*}
\item[(B$2$)] There exists $\aa_3>0$ such that
\begin{equation*}
\E|b(t,\phi)-b(t,
\psi)|^2+\E\int_\GG|\si(t,\phi,z)-\si(t,\psi,z)|^2m(\d z)\le
\aa_3\sup_{-\tau\le\theta\le0}\E|\phi(\theta)-\psi(\theta)|^2.
\end{equation*}
\end{enumerate}

The main result in this section is stated as follows.

\begin{thm}\label{jump}
{\rm Under (B1)-(B2), \eqref{b1} has a unique
 invariant measure $\pi(\cdot)\in\mathcal {P}(\D)$ and is
 exponentially mixing. More precisely, there exists $\ll>0$ such
 that
\begin{equation*}
|P_tF(\xi)-\pi(F)|\le c\e^{-\ll t},\ \ \ t\ge\tau,\ F\in\B_b(\D),\
\xi\in\D.
\end{equation*}
}
\end{thm}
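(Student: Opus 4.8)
The plan is to follow the architecture of Theorem~\ref{Erg}, but to replace the Arzel\`{a}--Ascoli tightness criterion on $\C$ with the Kurtz (or Aldous--Rebolledo) criterion of tightness for the Skorohod space $\D$, since the sample paths are now c\`{a}dl\`{a}g rather than continuous. The proof splits into three steps: existence of an invariant measure via Krylov--Bogoliubov, uniqueness via a contraction estimate on the segment process, and the exponential mixing bound as a consequence.

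\textbf{Step 1 (uniform moment bound).} First I would establish the analogue of Lemma~\ref{uni}, namely $\sup_{t\ge-\tau}\E\|X_t(\xi)\|_\8^2<\8$. Applying the It\^o formula for jump processes to $|X(t)|^2$, the drift term is controlled by (B1) (after a one-sided bound $2\<\phi(0),b(t,\phi)\>+\int_\GG|\si(t,\phi,z)|^2 m(\d z)\le -\nu_1|\phi(0)|^2+\nu_2\sup_\theta|\phi(\theta)|^2+c$ deduced from (B1)--(B2) and Young's inequality as in \eqref{A3}), and the quadratic variation of the martingale part is exactly the $\int_\GG|\si|^2 m(\d z)$ term already absorbed. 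Then Lemma~\ref{Inq1} and Lemma~\ref{Inq2} give $\sup_{t\ge-\tau}\E|X(t)|^2<\8$, and a Burkholder--Davis--Gundy estimate for the jump martingale over a window of length $\tau$, together with (B2) to bound $\E\int_{t-\tau}^t|b(s,X_s)|^2\d s$, upgrades this to the supremum norm as in \eqref{A16}--\eqref{A17}.

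\textbf{Step 2 (tightness and existence).} With $\mu_n(\cdot):=\frac1n\int_0^n\P_t(\xi,\cdot)\d t$, I would verify the Kurtz tightness criterion on $(\D,\d_S)$: compact containment follows from the moment bound of Step~1 via Chebyshev, and the modulus-of-continuity condition for the Skorohod topology reduces to controlling, for $v\le u$ with $u-v$ small, the increments $X(u)-X(v)$. The drift contribution $\int_v^u b(s,X_s)\d s$ is handled exactly as in \eqref{B2}--\eqref{B3} using local boundedness of $b$ on the high-probability event $\{\|X_t\|_\8,\|X_{t+\tau}\|_\8\le R_0\}$; the jump-martingale contribution is controlled by a moment estimate $\E\big|\int_s^t\int_\GG\si(r,X_{r-},z)\tt N(\d r,\d z)\big|^2\le c\int_s^t(1+\E\|X_r\|_\8^2)\d r\le c(t-s)$, which by the Aldous criterion yields the required smallness uniformly in $t\ge\tau$. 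Relative compactness of $\{\mu_n\}$ then gives a weak limit $\pi\in\mathcal P(\D)$, which is invariant by Krylov--Bogoliubov once the eventually-Feller property of $P_t$ (valid for $t\ge\tau$, which is why the statement asserts \eqref{B6} only for $t\ge\tau$) is invoked.

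\textbf{Step 3 (uniqueness and mixing).} Applying the It\^o formula to $\E|X(t,\xi)-X(t,\eta)|^2$, condition (B1) gives $u'(t)\le -\aa_1 u(t)+\aa_2\sup_{t-\tau\le s\le t}u(s)$, so Lemma~\ref{Halanay} (with $\aa_1>\aa_2$) yields $\E|X(t,\xi)-X(t,\eta)|^2\le\|\xi-\eta\|_\8^2\e^{-\ll t}$; a BDG estimate over a $\tau$-window, using (B2) to bound the drift difference, upgrades this to $\E\|X_t(\xi)-X_t(\eta)\|_\8^2\le c\e^{-\ll t}$ for $t\ge\tau$ as in \eqref{B5}. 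Uniqueness then follows by integrating against $\pi\otimes\pi'$ over bounded Lipschitz $f$ and letting $t\to\8$, and the exponential mixing bound $|P_tF(\xi)-\pi(F)|\le\int_\D|P_tF(\xi)-P_tF(\eta)|\pi(\d\eta)$ follows from the same estimate, using a density/approximation argument to pass from Lipschitz to arbitrary $F\in\B_b(\D)$.

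\textbf{Main obstacle.} I expect the delicate point to be the tightness verification in Step~2: on the non-separable space $(\D,\|\cdot\|_\8)$ one cannot use the uniform-topology Arzel\`{a}--Ascoli argument of Theorem~\ref{Erg}, and one must instead work with the Skorohod modulus $w'_{[-\tau,0]}(\cdot,\dd)$ and check that the jump part does not contribute uncontrolled oscillations --- i.e. that between consecutive jumps the path is close to a drift path and the jump times are suitably spread. The Kurtz criterion (via stopping times $\E[\,\cdot\mid\F_{S}\,]$) is tailored for exactly this, and the moment estimate on the compensated Poisson integral together with local boundedness of $b$ is what makes it go through; getting the conditioning and the event $\{\|X_t\|_\8\le R_0\}$ to interact correctly with the Skorohod modulus is the one place requiring care beyond routine adaptation of Theorem~\ref{Erg}.
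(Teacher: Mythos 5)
Your proposal is correct and follows essentially the same three-step architecture as the paper: a uniform second-moment bound on the segment process, Kurtz-type tightness on $(\D,\d_S)$ combined with the eventually-Feller property and Krylov--Bogoliubov for existence, and the Halanay inequality plus a BDG upgrade to the segment norm (tested against Skorohod-Lipschitz functions, then extended by a monotone class argument) for uniqueness and exponential mixing. The only cosmetic difference is in the tightness step, where the paper bounds the drift increment directly via the $L^2$ linear-growth bound implied by (B2) rather than via local boundedness on a high-probability event.
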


\begin{proof}
The whole proof is divided into the following three steps.

\smallskip
 \noindent{\bf
Step 1:} Claim a uniform bound of $X_t$:
\begin{equation}\label{c4}
\sup_{t\ge-\tau}\E\|X_t\|_\8^2<\8.
\end{equation}
Following a similar argument to derive \eqref{A8}, we derive that
\begin{equation}\label{c1}
\dd:=\sup_{t\ge-\tau}\E|X(t)|^2<\8.
\end{equation}
By the It\^o formula, for any $t\ge\tau$ and $\theta\in[-\tau,0]$,
it follows that
\begin{equation}\label{c3}
\begin{split}
|X(t+\theta)|^2
&=|X(t-\tau)|^2+2\int_{t-\tau}^{t+\theta}\langle X(s), b(s,X_s)\rangle\d s\\
&\quad+\int_{t-\tau}^{t+\theta}\int_\GG|\si(s,X_{s-},z)|^2 N(\d s,\d
z)+2\Pi(t,t+\theta),
\end{split}
\end{equation}
in which
\begin{equation*}
\Pi(t,t+\theta):=\int_{t-\tau}^{t+\theta}\int_\GG\langle X(s-),
\si(s,X_{s-},z)\rangle\tt N(\d s,\d z).
\end{equation*}
Next, due to  the Burkhold-Davis-Gundy inequality (see e.g.
\cite[Theorem 48, p.193]{P04}), and the Jensen inequality, we derive
that
\begin{equation}\label{c2}
\begin{split}
&\E\Big(\sup_{-\tau\le \theta\le 0}|\Pi(t,t+\theta)|\Big)\le c\E\sqrt{[\Pi,\Pi]_{[t-\tau,t]}}\\
&\le c\E\ss{\int_{t-\tau}^t\int_\GG |\< X(s-),
\si(s,X_{s-},z)\>|^2N(\d s,\d z)}\\
&\le c\ss{\E\|X_t\|^2_\8\E\int_{t-\tau}^t\int_\GG
|\si(s,X_{s-},z)|^2N(\d s,\d z)}\\
&\le\ff{1}{4} \E\|X_t\|^2_\8+c\E\int_{t-\tau}^t\int_\GG
|\si(s,X_s,z)|^2m(\d z)\d s,
\end{split}
\end{equation}
where $[\Pi,\Pi]_{[t-\tau,t]}$ stands for the quadratic variation
process (square bracket process) of $\Pi(t,t-\tau)$. Taking
\eqref{c3} and \eqref{c2} into consideration and using (B1) and
(B2), we arrive at
\begin{equation*}
\begin{split}
\E\|X_t\|^2
&\le2\E|X(t-\tau)|^2+c\int_{t-\tau}^t\Big(1+\sup_{-\tau\le r\le
s}\E|X(r)|^2\Big)\d s,\ \ \ t\ge\tau.
\end{split}
\end{equation*}
This, together with \eqref{c1}, leads to  \eqref{c4}.

\smallskip
\smallskip
\noindent {\bf Step 2:} Existence of an invariant measure. For
$\theta\in[-\tau,0]$ and $\wdt \theta\in[0,\3]$, where $\3>0$ is an
arbitrary constant such that $\theta+\3\in[-\tau,0].$ Set
$\E_s\cdot:=\E(\cdot|\F_s),s\ge0$. By the It\^o isometry, for any
$t\ge\tau$, we obtain from \eqref{b1} that
\begin{equation*}
\begin{split}
\E_{t+\theta}|X_t(\theta+\wdt \theta)-X_t(\theta)|^2 & =\E_{t+\theta}|X(t+\theta+\wdt \theta)-X(t+\theta)|^2\\
&\le
c\int_{t+\theta}^{t+\theta+\3}\E_{t+\theta}\Big\{|b(s,X_s)|^2+\int_\GG|\si(s,X_{s-},z)|^2m(\d
z)\Big\}\d s.
\end{split}
\end{equation*}
By virtue of (B1)-(B2) and \eqref{c4}, there is a $\gamma_0 (t,\3)$ satisfying
$$\E_{t+\theta}|X(t+\theta+\wdt \theta)-X(t+\theta)|^2 \le \E_{t+\theta}\gamma_0(t,\3).$$
Taking expectation and $\limsup_{t\to \infty}$ followed by
$\lim_{\3\to 0}$, we obtain from (B1)-(B2) and \eqref{c4} that
\begin{equation}\label{c5}
\lim_{\3\to0}\limsup_{t\to \infty}\E\gamma_0(t,\3)=0.
\end{equation}
In view of \eqref{c4} and \eqref{c5}, combining with \cite[Theorem
3, p.47]{K}, we conclude that $X_t$ is tight under the Skorohod
metric $\d_S$. For each integer $n\ge1$, set
\begin{equation*}
\mu_n(\cdot):=\ff{1}{n}\int_0^n\P_t(\xi,\cdot)\d t,
\end{equation*}
where $\P_t(\xi,\cdot)$ is the Markovian transition kernel of
$X_t(\xi)$. Since  $X_t$ is tight under the Skorohod metric $\d_S$,
for any $\vv>0$ there exists a compact subset $U\in\B(\D)$ such that
$\P(X_t\in U)\le1-\vv$. Hence we have $\mu_n(U)\le1-\vv$. That is,
$\{\mu_n(\cdot)\}_{n\ge1}$ is tight. Observe from Rei$\beta$ et al.
\cite{RR} that $P_t$ is eventually Feller. As a result, by the
Krylov-Bogoliubov theorem \cite[Theorem 3.1.1, p.21]{DZ}, we
conclude that \eqref{b1} has a unique
 invariant measure $\pi(\cdot)\in\mathcal {P}(\D)$, where $\D$ is
 equipped with the Skorohod topology.

\smallskip
\smallskip
\noindent {\bf Step 3:} Exponential Mixing. By  the Halanay-type
inequality \ref{Halanay}, one has from (B1) that
\begin{equation}\label{c7}
\E|X(t,\xi)-X(t,\eta)|\le c\e^{-\ll t}
\end{equation}
for some $\ll>0.$ Carrying out a similar argument to get \eqref{B5},
we derive from \eqref{c7} that
\begin{equation}\label{c6}
\E\|X_t(\xi)-X_t(\eta)\|_\8\le c\e^{-\ll t}.
\end{equation}
By the definition of $\d_S$,  note that
\begin{equation}\label{c8}
\d_S(\xi,\eta)\le\|\xi-\eta\|_\8
\end{equation}
by choosing $\ll(t)\equiv t$ in \eqref{03}. For any bounded
Lipschitz function $F:\D\to\R$, by the invariance of
$\pi(\cdot)\in\mathcal {P}(\D)$, it follows from \eqref{c8} that
\begin{equation*}
\begin{split}
 |P_tF(\xi)-\pi(F)|&\le\int_\C| P_tF(\xi)-P_tF(\eta)|\pi(\d\eta)\le
 c\E\d_S(X_t(\xi),X_t(\eta))\\
 &\le c\E\|X_t(\xi)-X_t(\eta)\|_\8.
 \end{split}
\end{equation*}
Consequently,  the desired assertion follows from \eqref{c6} and a
monotone class argument for any $F\in\B_b(\D)$.
\end{proof}

\begin{rem}
{\rm Since the semigroup $P_t$ generated by the segment process
associated with \eqref{b1} is not stochastically continuous for
$t\in[0,\tau)$ (see e.g. \cite[p.1416]{RR}), $P_t$ is not
immediately Feller. Hence, $t\ge\tau$ imposed in Theorem \ref{jump}
is natural. However, Theorem \ref{Erg} and Theorem \ref{neutral}
hold respectively for any $t\ge0$. This further shows the different
features of  retarded SDEs with continuous sample paths and the ones
driven by jump processes.

}
\end{rem}

\begin{rem}
{\rm By a remote start method (or dissipative method), Bao et al.
\cite{BYY} discussed ergodic property for several class of
functional SDEs, which cannot cover the equations considered in this
paper. In particular,  the remote start method applied therein only
deals with the autonomous functional SDEs, while the approaches
adopted in Theorem \ref{Erg} and Theorem \ref{jump} even work for
non-autonomous cases. }
\end{rem}

\begin{rem}
{\rm Since the space $\D$ is complete but not separable  under the
uniform metric, the Arzel\`{a}-Ascoli method adopted in Theorem
\ref{Erg} is unavailable for functional SDEs with jumps. Moreover,
  Kurtz's criterion used in Theorem \ref{jump} also applies to
infinite-dimensional semi-linear retarded SPDEs driven by jump
processes. Hence our method is dimensional-free while the trick used
in Bo and Yuan \cite{BY} is dimensional-dependent. }
\end{rem}

\end{document}